\title[Homotopical group theory]{The classification of $p$--compact groups and homotopical group theory}
\author[J. Grodal]{Jesper Grodal
\thanks{Supported by an ESF EURYI award and the Danish National Research Foundation.}
}
\newcounter{thecounter}
\numberwithin{thecounter}{section}
\newtheorem{prop}[thecounter]{Proposition}
\newtheorem{thm}[thecounter]{Theorem}
\newtheorem{defn}[thecounter]{Definition}
\theoremstyle{definition}
\newtheorem{rem}[thecounter]{Remark}
\numberwithin{equation}{section}
\newcommand{\map}{\operatorname{map}}
\newcommand{\ho}[1]{\operatorname{Ho}(#1)}
\newcommand{\ch}{\operatorname{char}}
\newcommand{\Aut}{\operatorname{Aut}}
\newcommand{\Rep}{\operatorname{Rep}}
\newcommand{\Out}{\operatorname{Out}}
\newcommand{\Hom}{\operatorname{Hom}}
\newcommand{\im}{\operatorname{im}}
\newcommand{\hocolim}{\operatorname{hocolim}}
\newcommand{\cC}{{\mathcal{C}}}
\newcommand{\calD}{{\mathcal{D}}}
\newcommand{\bO}{{\mathbf O}}
\newcommand{\op}{\operatorname{op}}
\newcommand{\Spaces}{\operatorname{{\bf Spaces}}}
\newcommand{\pcom}{\hat{{}_p}}
\newcommand{\twocom}{\hat{{}_2}}
\newcommand{\GL}{\operatorname{GL}}
\newcommand{\SO}{\operatorname{SO}}
\newcommand{\SU}{\operatorname{SU}}
\newcommand{\Sp}{\operatorname{Sp}}
\newcommand{\DI}{\operatorname{DI}}
\newcommand{\PU}{\operatorname{PU}}
\newcommand{\Spin}{\operatorname{Spin}}
\newcommand{\Q}{{\mathbb {Q}}}
\newcommand{\F}{{\mathbb {F}}}
\newcommand{\Z}{{\mathbb {Z}}}
\newcommand{\D}{{\mathbf D}}
\newcommand{\CP}{{\mathbb{C}P}}
\newcommand{\RP}{{\mathbb{R}P}}
\newcommand{\HP}{{\mathbb{H}P}}
\newcommand{\bA}{{\mathbb{A}}}
\newcommand{\A}{{\mathbf A}}
\newcommand{\W}{{\mathcal{W}}}
\newcommand{\N}{{\mathcal{N}}}
\newcommand{\C}{{\mathbf{C}}}
\newcommand{\cZ}{{\mathcal{Z}}}
\newcommand{\bbC}{{\mathbb{C}}}
\newcommand{\beq}{\begin{eqnarray*}}
\newcommand{\eeq}{\end{eqnarray*}}
\newcommand{\tuborg}{\left\{\begin{array}{ll}}
\newcommand{\sluttuborg}{\end{array}\right.}
\newfont{\bm}{msbm10}
\newcommand{\U}{\mathrm{U}}
\newcommand{\Sol}{\operatorname{Sol}}
\newcommand{\Gr}{\operatorname{Gr}}
\newcommand{\rank}{\operatorname{rank}}
\newcommand{\dZ}{\breve \cZ}
\newcommand{\dT}{\breve T}
\renewcommand{\phi}{\varphi}
\newcommand*\InsertTheoremBreak{
  \begingroup 
    \setlength\itemsep{0pt}
    \setlength\parsep{0pt}
    \item[\vbox{\null}]
  \endgroup
}
\def\co{\colon\thinspace}
\begin{document}
\begin{abstract} 
We survey some recent advances in the homotopy theory of classifying
spaces, and homotopical group theory. We focus on the classification
of $p$--compact groups in terms of root data over the $p$--adic
integers, and discuss some of its consequences e.g.\ for finite loop spaces and polynomial
cohomology rings.
\end{abstract}

\begin{classification}
Primary: 55R35; Secondary: 55R37, 55P35, 20F55.
\end{classification}

\begin{keywords}
Homotopical group theory, classifying space, $p$--compact group, reflection
group, finite loop
space, cohomology ring.
\end{keywords}

\maketitle

\bigskip

Groups are ubiquitous in real life, as symmetries of geometric
objects. For many purposes in mathematics, for instance in bundle theory, it is however not the group itself but rather its
classifying space, which takes center stage. 
The classifying space encodes the group multiplication directly in a topological space,
to be studied and manipulated using the toolbox of homotopy theory. This leads
to the idea of homotopical group theory, that one should try to do group theory in terms of classifying spaces.

The idea that there should be a homotopical version of group theory is an old one. 
The seeds were sown already in the 40s and 50s with the work of
Hopf and Serre on finite $H$--spaces and loop
spaces, and these objects were intensely studied in the 60s using the techniques of Hopf
algebras, Steenrod operations, etc., in the hands of Browder,
Thomas, and others. A bibliography containing
347 items was collected by James in 1970
\cite{james71}; see also \cite{kane88} for a continuation.

In the same year, Sullivan, in his widely circulated MIT notes
\cite{sullivan05,sullivan74}, laid out a theory of $p$--completions of topological
spaces, which had a profound influence on the subject. On the one hand it
provided an infusion of new exotic examples, laying old hopes and conjectures
to rest. On the other hand his theory of $p$--completions seemed to indicate
that the dream of doing group theory on the level of classifying spaces
could still be valid, if one is willing to replace real life, at
least temporarily, by a $p$--adic existence. However, the tools for seriously
digging into the world of $p$--complete
spaces were at the time insufficient, a stumbling block being the so-called
Sullivan conjecture \cite[p.~179]{sullivan05} relating fixed-points to homotopy fixed-points, at a
prime $p$.

The impasse ended with the solution of the Sullivan conjecture
by Miller \cite{miller84short}, and the work of Carlsson \cite{carlsson84}, reported on at this congress
in 1986
\cite{miller87short,carlsson87short}, followed by the development of
``Lannes theory'' \cite{lannes92short,lannes95icm} giving effective tools for calculating
homotopy fixed-points and maps between classifying spaces.
This led to a spate of progress. Dwyer and Wilkerson \cite{DW94} defined the notion of a
$p$--compact group, a $p$--complete version of a finite loop space, and showed
that these objects posses much of the structure
of compact Lie groups:
maximal tori, Weyl groups, etc.
In parallel to this, Jackowski, McClure, and Oliver \cite{JMO92} combined Lannes theory with space-level
decomposition techniques and sophisticated homological algebra
calculations to get precise
information about maps between classifying spaces of compact Lie groups,
that used to be out of reach.
These developments were described at this congress
in 1998 \cite{dwyer98short,oliver98short}.

The aim here is to report on some recent progress, building on the above
mentioned achievements. In particular, a complete classification of
$p$--compact groups has recently been obtained in collaborations involving
the author
\cite{AGMV08,AG09}. It
states that connected $p$--compact groups are classified by their root data
over the $p$--adic integers $\Z_p$
(once defined!), completely analogously to the
classification of compact connected Lie groups by root data over $\Z$.
It has in turn allowed for the solution of a number
of problems and conjectures dating from the
60s and 70s, such as the Steenrod problem of realizing polynomial cohomology
rings and the so-called maximal torus conjecture giving a completely
homotopical description of compact Lie groups. By local-to-global
principles the classification of $p$--compact groups furthermore provides a
quite complete understanding of what finite loop spaces look like,
integrally as well as rationally.

Homotopical group theory has branched out considerably over the last decade. There is
now an expanding theory of homotopical versions of finite
groups, the so-called $p$--local finite groups, showing signs of
strong connections to deep questions in finite group theory, such as
the classification of finite simple groups. There has been progress on homotopical group
actions, providing in some sense a homotopical version of the ``geometric
representation theory'' of tom Dieck
\cite{tomdieck87short}. And there is even evidence that certain aspects of the theory
might extend to Kac--Moody groups and other classes of groups. We shall
only be able to provide very small appetizers to some of these last
developments, but we hope
that they collectively serve as an inspiration to the reader to try to
take a more homotopical approach to his or her favorite class of groups.

\smallskip

This paper is structured as follows: Section~\ref{section:root} is an
algebraic prelude, discussing the theory of $\Z_p$--root data---the impatient reader
can skip it at first, referring back to it as needed.
Section~\ref{section:pcg} gives the definition and basic properties of
$p$--compact groups, states the classification theorem, and outlines
its proof. It also
presents various structural consequences for $p$--compact groups.
Section~\ref{section:loop} discusses applications to finite loop
spaces such as an algebraic parametrization of finite loop spaces and the
solution of the maximal torus conjecture.
Section~\ref{section:steenrod} presents the solution of the Steenrod problem
of realizing polynomial cohomology rings, and
finally
Section~\ref{section:vistas} provides brief samples of other topics in
homotopical group theory.

\smallskip\noindent
{\em Notation:}
Throughout this paper, the word ``space'' will mean ``topological space of the
homotopy type of a CW--complex''. 

\smallskip\noindent
{\em Acknowledgments:} I would like to thank Kasper Andersen, Bill Dwyer, Haynes
Miller, and Bob Oliver 
for providing helpful comments on a preliminary
version of this paper. I take the opportunity to thank
my coauthors on the various work reported on here, and in particular
express my gratitude to Kasper Andersen for
our mathematical collaboration and sparring through the years.

\section{Root data over the $p$--adic integers}\label{section:root}
In standard Lie theory, root data classify compact
connected Lie groups as well as reductive algebraic groups over algebraically
closed fields. A root
datum is usually packaged as a quadruple $(M,\Phi,M^{\vee},\Phi^\vee)$ of roots $\Phi$ and coroots $\Phi^\vee$ in a $\Z$--lattice $M$
and its dual $M^\vee$, satisfying some conditions
\cite{demazure62short}. For
$p$--compact groups the lattices that come up are 
lattices over
the $p$--adic integers $\Z_p$, rather than $\Z$, so the concept of a root datum needs to 
be tweaked to make sense also in this setting, and one must carry out a
corresponding classification.  In this section we produce a short
summary of this
theory, based on \cite{notbohm96short,DW05,AG08auto,AG09}. 
In what follows $R$ denotes a principal ideal domain of characteristic
zero. 

\medskip

The starting point is the theory of reflection
groups, surveyed e.g.\ in \cite{GM06}.
A finite $R$--reflection group
is a pair $(W,L)$ such that $L$ is a finitely generated free
$R$--module and $W \subseteq \Aut_R(L)$ is a finite
subgroup generated by reflections, i.e., non-trivial elements $\sigma$ that fix an
$R$--submodule of corank one.

Reflection groups have been classified for several choices of $R$, the most well-known
cases being the classification of finite real and rational reflection
groups in terms of certain Coxeter diagrams \cite{humphreys90}. Finite
complex reflection
groups were classified by Shephard--Todd \cite{ST54} in 1954. 
The main (irreducible) examples in the complex case are the groups 
$G(m,s,n)$ of $n\times n$ monomial matrices with non-zero entries
being $m$th roots of unity and determinant an $(m/s)$th root of unity, where
$s | m$; in addition to this
there are $34$ exceptional cases usually named $G_4$ to $G_{37}$.
From the
classification over $\mathbb C$ one
can obtain a classification over $\Q_p$ as the sublist
whose character field $\Q(\chi)$ is embeddable in $\Q_p$. This was
examined by Clark--Ewing \cite{CE74}, and we list their result in
Table~1, using the original notation.

\begin{table}[ht]
{
\tiny
$$\begin{array}{@{}l|l|l|l|l@{}@{}l@{}}
\mbox{W} & \mbox{Order} & \mbox{Degrees} & {\mathbb Q}(\chi) & \mbox{Primes} \\
\hline
\Sigma_{n+1} \mbox{\hspace{1.41cm} (family 1)}& (n+1)!&2,3,\ldots,n+1 & \Q& \mbox{ all } p\\
\hline
G(m,s,n) \mbox{\hspace{0.93cm} (family 2a)} & \multirow{2}{*}{$n!m^{n-1}\frac{m}{s}$}&\multirow{2}{*}{$m,2m,\ldots,(n-1)m, n\frac{m}{s}$}& \multirow{2}{*}{$\Q(\zeta_{m})$}& p \equiv 1  &\pod{m};\\
m \geq 2, n\geq 2, m \neq s \mbox{ if }n=2&&&&
 \multicolumn{2}{|l}{{\mbox{all } p \mbox{ for } m=2}}
\\
\hline
D_{2m} = G(m,m,2) \mbox{\,\,(family 2b)}& \multirow{2}{*}{$2m$}&\multirow{2}{*}{$2,m$}& \multirow{2}{*}{$\Q(\zeta_m +\zeta_m^{-1})$}& p \equiv \pm 1 &\pod{m};\\
 m\geq3  &&&&  \multicolumn{2}{|l}{{\mbox{all } p \mbox{ for } m=3,4,6}}\\
\hline
C_m = G(m,1,1)  \mbox{\,\,\,\,\,\, (family 3)}& \multirow{2}{*}{$m$}& \multirow{2}{*}{$m$}& \multirow{2}{*}{$\Q(\zeta_m)$}&p \equiv 1 &\pod{m};\\
 m\geq2&&&&
 \multicolumn{2}{|l}{{\mbox{all } p \mbox{ for } m=2}}
\\
\hline
  G_4 & 24 & 4,6 & {\mathbb Q}(\zeta_3) & p \equiv 1 &\pod{3}  \\
  G_5 & 72 & 6,12 & {\mathbb Q}(\zeta_3) & p \equiv 1 &\pod{3}  \\
  G_6 & 48& 4,12 & {\mathbb Q}(\zeta_{12}) & p \equiv 1 &\pod{12}  \\
  G_7 & 144& 12,12 & {\mathbb Q}(\zeta_{12}) & p \equiv 1 &\pod{12} \\
  G_8 & 96& 8,12 & {\mathbb Q}(\zeta_{4}) & p \equiv 1 &\pod{4} \\
  G_9 & 192 & 8,24 & {\mathbb Q}(\zeta_8) & p \equiv 1 &\pod{8} \\
  G_{10} & 288& 12,24 & {\mathbb Q}(\zeta_{12}) & p \equiv 1 &\pod{12} \\
  G_{11} & 576 & 24,24 & {\mathbb Q}(\zeta_{24}) & p \equiv 1 &\pod{24} \\
  G_{12} &  48& 6,8 & {\mathbb Q}(\sqrt{-2}) & p \equiv 1,3 &\pod{8} \\
  G_{13} & 96 & 8,12 & {\mathbb Q}(\zeta_8) & p \equiv 1 &\pod{8} \\
  G_{14} & 144 & 6,24 & {\mathbb Q}(\zeta_3,\sqrt{-2}) & p \equiv 1,19 &\pod{24} \\
  G_{15} & 288 & 12,24 & {\mathbb Q}(\zeta_{24}) & p \equiv 1 &\pod{24} \\
  G_{16} & 600 & 20,30 & {\mathbb Q}(\zeta_5) & p \equiv 1 &\pod{5} \\
  G_{17} & 1200& 20,60 & {\mathbb Q}(\zeta_{20}) & p \equiv 1 &\pod{20} \\
  G_{18} & 1800 & 30,60 & {\mathbb Q}(\zeta_{15}) & p \equiv 1 &\pod{15} \\
  G_{19} & 3600& 60,60 & {\mathbb Q}(\zeta_{60}) & p \equiv 1 &\pod{60}  \\
  G_{20} & 360& 12,30 &{\mathbb Q}(\zeta_3,\sqrt{5}) & p \equiv 1,4 &\pod{15}  \\
  G_{21} & 720 & 12,60 & {\mathbb Q}(\zeta_{12},\sqrt{5}) & p \equiv 1,49 &\pod{60} \\
  G_{22} & 240 & 12,20 & {\mathbb Q}(\zeta_4,\sqrt{5}) & p \equiv 1,9 &\pod{20} \\
  G_{23} & 120 & 2,6,10 & {\mathbb Q}(\sqrt{5}) & p \equiv 1,4 &\pod{5}  \\
  G_{24} & 336& 4,6,14 & {\mathbb Q}(\sqrt{-7}) & p \equiv 1,2,4 &\pod{7} \\
  G_{25} & 648 & 6,9,12 & {\mathbb Q}(\zeta_3) & p \equiv 1 &\pod{3} \\
  G_{26} & 1296& 6,12,18 & {\mathbb Q}(\zeta_3) & p \equiv 1 &\pod{3} \\
  G_{27} & 2160 & 6,12,30 & {\mathbb Q}(\zeta_3,\sqrt{5}) & p \equiv 1,4 &\pod{15}\\
  G_{28} & 1152 & 2,6,8,12 & {\mathbb Q} & \mbox{  all } p \\
  G_{29} & 7680& 4,8,12,20 & {\mathbb Q}(\zeta_4) & p\equiv 1 &\pod{4} \\
  G_{30} & 14400& 2,12,20,30 & {\mathbb Q}(\sqrt{5}) & p \equiv 1,4 &\pod{5} \\
  G_{31} & 64\cdot 6!& 8,12,20,24 & {\mathbb Q}(\zeta_4) & p \equiv 1 &\pod{4} \\
  G_{32} & 216\cdot 6!& 12,18,24,30 & {\mathbb Q}(\zeta_3) & p \equiv 1 &\pod{3}  \\
  G_{33} & 72\cdot 6!& 4,6,10,12,18 & {\mathbb Q}(\zeta_3) & p \equiv 1 &\pod{3}  \\
  G_{34} & 108\cdot 9!& 6,12,18,24,30,42 & {\mathbb Q}(\zeta_3) & p \equiv 1 &\pod{3} \\
  G_{35} & 72\cdot 6!& 2,5,6,8,9,12 & {\mathbb Q} & \mbox{  all } p \\
  G_{36} & 8\cdot 9! & 2,6,8,10,12,14,18 & {\mathbb Q} & \mbox{  all } p \\
  G_{37} & 192\cdot 10! & 2,8,12,14,18,20,24,30 & {\mathbb Q} & \mbox{  all } p \\
\end{array}$$
\vspace{-0.5cm}
\caption{The irreducible $\Q_p$-reflection groups}
}\end{table}

The ring $\Q_p[L]^W$ of $W$--invariant polynomial functions on $L$ is
polynomial if and only if $W$ is a reflection group, by the
Shephard--Todd--Chevalley theorem  \cite[Thm.~7.2.1]{benson93}; the
column {\em
  degrees} in Table~1 lists the degrees of the generators, and the
number of degrees equals the rank of
$(W,L)$. 
For many $W$, none of the primes listed in the last column divide $\lvert W\rvert$; in
fact this can only happen in the infinite families, and in the
sporadic examples $12$, $24$, $28$, $29$, $31$, and $34$--$37$.
It is a good exercise to look for the Weyl groups of the various
simple compact Lie groups in the table, where they have character field
$\Q$. One may observe that for $p=2$ and $3$ there is only
one {\em exotic} reflection group (i.e., irreducible
with $\Q(\chi) \neq \Q$), namely $G_{24}$ and $G_{12}$ respectively, whereas for
$p\geq5$ there are always infinitely many.

The classification over $\Q_p$ can be lifted to a classification over
$\Z_p$, but instead of stating this now, we proceed directly to root data.

\begin{defn}
An {\em $R$--root datum} $\D$ is a triple $(W,L,\{Rb_\sigma\})$, where $(W,L)$
is a finite $R$--reflection group, and $\{Rb_\sigma\}$
is a collection of rank one submodules of $L$, indexed by the set of
reflections $\sigma$ in $W$, and satisfying that $\im(1-\sigma) \subseteq
Rb_\sigma$ (coroot condition) and  $w (Rb_\sigma) =
Rb_{w\sigma w^{-1}} \text{ for all } w \in W$ (conjugation invariance).
\end{defn}

An {\em isomorphism} of $R$--root data $\phi\co \D
\to \D'$ is defined to be an isomorphism $\varphi\co L \to L'$  such that $\varphi
W \varphi^{-1} = W'$ as subgroups of $\Aut(L')$ and 
$\varphi(Rb_\sigma) = R b'_{\varphi\sigma\varphi^{-1}}$  for every reflection
$\sigma\in W$.
The element $b_\sigma \in L$, determined up to a unit in $R$, is
called the {\em coroot} corresponding to $\sigma$. The coroot condition
ensures that given $(\sigma,b_\sigma)$  we can define a {\em root} $\beta_\sigma\co L \to R$ via the
formula 
\begin{equation}\label{reflectioneqn}
\sigma(x) = x - \beta_\sigma(x)b_\sigma
\end{equation}

The classification of $R$--root data of course depends heavily on $R$. 
For $R=\Z$ root data correspond bijectively to classically defined
 root data $(M,\Phi,M^\vee,\Phi^\vee)$ via the association $
 (W,L,\{\Z b_\sigma\}) \rightsquigarrow (L^*,\{\pm \beta_\sigma \},L,
 \{\pm  b_\sigma \})$. 
One easily checks that $R b_\sigma \subseteq \ker(N)$,
where $N = 1 + \sigma + \ldots + \sigma^{\lvert\sigma\rvert -1}$ is the norm
element, so giving an $R$--root datum with underlying reflection group
$(W,L)$ corresponds to choosing a cyclic $R$--submodule of
$H^1(\langle\sigma\rangle;L)$ for each conjugacy class of reflections
$\sigma$.  It is hence in practice not hard to parametrize all
possible $R$--root data supported by a given finite $R$--reflection
group. For $R=\Z_p$, $p$ odd, reflections have order
dividing $p-1$, hence prime to $p$, so here $\Z_p$--root data coincides with finite
$\Z_p$--reflection groups. For $R = \Z$ or $\Z_2$  the difference between the two notions
only occur for the root data of $\Sp(n)$ and $\SO(2n+1)$, but
due to the ubiquity of $\SU(2)$ and $\SO(3)$ this distinction turns
out to be an important one.
Note that since a root and a coroot $(\beta_\sigma, b_\sigma)$ determine the reflection
$\sigma$ by \eqref{reflectioneqn}, one could indeed have defined a root datum as a set of pairs
$(\beta_\sigma,b_\sigma)$, each determined up to a unit and
subject to certain conditions;
 see also \cite{nebe99}.

The relationship between $\Z_p$--root data and
$\Z$--root data is given as follows.

\begin{thm}[The classification of $\Z_p$--root data, splitting
  version] \label{rootdata-classification}
\InsertTheoremBreak
\begin{enumerate}
\item \label{rdc-part1}
Any $\Z_p$--root datum $\D$ can be written 
as a product $\D \cong (\D_1
\otimes_{\Z} \Z_p) \times \D_2$, where $\D_1$ is a $\Z$--root datum and $\D_2$
is a product of exotic $\Z_p$--root data.
\item \label{rdc-part2}
Exotic $\Z_p$--root data are in $1$-$1$ correspondence with exotic
$\Q_p$--reflection groups via $\D = (W, L, \{\Z_p
b_\sigma\}) \rightsquigarrow (W, L\otimes_{\Z_p} \Q_p)$.
\end{enumerate}
\end{thm}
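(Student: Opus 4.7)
The plan is to reduce both parts to an irreducible decomposition and then invoke the Clark--Ewing/Shephard--Todd classification summarized in Table~1, together with a $\Z_p$-lattice analysis in the spirit of Notbohm.

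For Part~(\ref{rdc-part1}), I would first decompose the underlying $\Q_p$-reflection group $(W,V)$, with $V = L \otimes_{\Z_p} \Q_p$, into $W$-irreducible summands $V = \bigoplus V_i$, giving a corresponding factorization $W = \prod W_i$ into irreducible reflection groups. Next, I would promote this to a $\Z_p$-lattice splitting $L = \bigoplus L_i$ with $L_i \subseteq V_i$. This is the technical heart of the argument: at primes dividing $|W|$ the central idempotents in $\Q_p[W]$ need not preserve $L$, so the splitting does not follow from Maschke's theorem. Instead, one uses the coroot data---each reflection $\sigma$ lies in a unique factor $W_i$, and the coroot $b_\sigma$ lies in $V_i$ because $\im(1-\sigma) \subseteq V_i$---to force the splitting, thereby showing that an indecomposable $\Z_p$-root datum has an irreducible $\Q_p$-reflection group. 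Each irreducible factor is then either rational ($\Q(\chi) = \Q$) or exotic, per Table~1; rational factors bundle into $\D_1 \otimes_\Z \Z_p$ via the classical correspondence between Weyl groups and $\Z$-root data (after sorting out the isogeny choices), while exotic factors bundle into $\D_2$.

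For Part~(\ref{rdc-part2}), given an exotic $\Q_p$-reflection group $(W,V)$, I would produce the unique $\Z_p$-root datum whose base change recovers $(W,V)$. The $W$-stable $\Z_p$-lattice $L \subset V$ is unique up to homothety---hence up to isomorphism of root data---because the irreducible $\Q_p W$-module $V$ descends to an absolutely irreducible module over the ring of integers in the character field $K = \Q(\chi) \neq \Q$, whose lattice theory is rigid. The coroot data then becomes automatic: for odd $p$, any reflection $\sigma$ has order $n \mid p-1$, so $p \nmid n$; this makes $\Z_p[\langle\sigma\rangle]$ semisimple and yields $\ker N = \im(1-\sigma)$, where $N = 1 + \sigma + \cdots + \sigma^{n-1}$. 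Combined with the defining inclusions $\im(1-\sigma) \subseteq \Z_p b_\sigma \subseteq \ker N$, this forces $\Z_p b_\sigma = \im(1-\sigma)$, uniquely determined by $\sigma$. The only exotic case at $p = 2$ is $G_{24}$, where the analogous vanishing $H^{-1}(\langle\sigma\rangle; L) = 0$ has to be verified by an explicit computation on the specific $\Z_2$-lattice sitting inside $\Q_2(\sqrt{-7})^3$.

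The main obstacle is the promotion of the $\Q_p$-decomposition to a $\Z_p$-splitting in Part~(\ref{rdc-part1}) at primes $p \mid |W|$: lattices in reducible representations generically do not decompose, so the argument must exploit the additional rigidity provided by the coroots. A closely linked subtlety is the uniqueness of the lattice $L$ in Part~(\ref{rdc-part2}); this hinges on the exotic irreducible $\Q_p W$-modules being absolutely irreducible over a nontrivial extension, whose integral lattice theory is much more restrictive than a generic $\Z_p W$-module.
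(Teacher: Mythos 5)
The step you describe as the technical heart of Part~(\ref{rdc-part1}) --- promoting the $\Q_p$--isotypic decomposition $V=\bigoplus V_i$ to a lattice splitting $L=\bigoplus L_i$, with the coroots ``forcing'' it --- is false, and no amount of coroot bookkeeping rescues it. Already for the $\Z_2$--root datum of $\U(2)$ (so $L=\Z_2^2$ with $\Sigma_2$ swapping coordinates) the coroot lattice $\Z_2(1,-1)$ together with the invariants $\Z_2(1,1)$ spans a sublattice of index $2$, and one checks directly that this root datum is indecomposable although its $\Q_2$--reflection group is reducible; $\SO(4)$ at $p=2$ glues two nontrivial rational factors in the same way. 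This is exactly why the theorem only asserts a factorization into a rational piece $\D_1\otimes_\Z\Z_p$ with $\D_1$ an arbitrary (possibly non-split) $\Z$--root datum times an exotic piece, and why the companion structure statement, Theorem~\ref{data-quotient-structure}, involves a quotient by a finite central subgroup rather than a product of irreducibles. What actually has to be proved is only that the \emph{exotic} factors split off completely, and the argument the paper points to uses the case-by-case facts that exotic root data satisfy $\pi_1(\D)=\dZ(\D)=0$, so they admit no central gluing to the remaining factors; your proposal never invokes these facts. Relatedly, bundling the rational factors into $\D_1\otimes_\Z\Z_p$ requires showing that a $\Z_p$--root datum whose reflection group has character field $\Q$ descends to $\Z$; ``sorting out the isogeny choices'' conceals a real step here.

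In Part~(\ref{rdc-part2}) your uniqueness argument rests on a principle that is not true: absolute irreducibility of $V$ over (the closure in $\Q_p$ of) the character field does not make $W$--stable lattices unique up to homothety --- note that the closure of $\Q(\chi)$ in $\Q_p$ is just $\Q_p$, and absolutely irreducible $\Q_pW$--modules with $p\mid\lvert W\rvert$ routinely contain non-homothetic stable lattices (the reflection representation of $\Sigma_p$ at $p$ contains the root lattice of $\SU(p)$ and the weight lattice of $\PU(p)$, which are non-isomorphic $\Z_p\Sigma_p$--lattices). The correct criterion is irreducibility of the mod~$p$ reduction $L\otimes\F_p$, and the case-by-case verification that all exotic reflection groups in Table~1 have irreducible mod~$p$ reduction is precisely the ``main ingredient'' the paper singles out; it is absent from your proposal. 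Your treatment of the coroots themselves ($p$ odd via $p\nmid\lvert\sigma\rvert$, plus a separate check for $G_{24}$ at $p=2$) is fine, but it is the lattice rigidity and the splitting-off of the exotic part that carry the theorem, and both are unproved as written.
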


Define the
{\em fundamental group} as $\pi_1(\D) = L/L_0$, where $L_0 = \sum_\sigma \Z_pb_\sigma$ is
the coroot lattice, and likewise, with
{\em $p$--discrete torus} $\dT = L \otimes \Z/p^\infty$, we
define the
{\em $p$--discrete center} as  $\dZ(\D) = \bigcap_{\sigma}
\ker(\breve\beta_\sigma\co \dT \to \Z/p^\infty)$; compare e.g.\ \cite{bo9short}. It turns out
that $\pi_1(\D) = \dZ(\D) = 0 $ for all exotic root data, and this plays a
role in the proof of the above statement.
If $A$ is a finite subgroup of $\dZ(\D)$, we can define a quotient
root datum $\D/A$ by taking $\dT_{\D/A} = \dT/A$, and hence $L_{\D/A} = \Hom(\Z/p^\infty,\dT/A)$, and defining the
roots and coroots of $\D/A$ via the induced maps.

\begin{thm}[The classification of $\Z_p$--root data, structure
  version]
\label{data-quotient-structure}
\InsertTheoremBreak
\begin{enumerate}
\item \label{dqs-part1} Any $\Z_p$--root datum $\D = (W, L, \{\Z_p b_\sigma\})$
  can be written as a quotient $$\D =  (\D_1 \times \cdots \times \D_n \times (1, L^W,
  \emptyset))/A$$ 
where $\pi_1(\D_i) = 0$ for all $i$,
for a finite central subgroup $A$.
\item \label{dqs-part2}
Irreducible $\Z_p$--root
data $\D$ with $\pi_1(\D) = 0$ are in $1$-$1$ or $2$-$1$ correspondence with non-trivial irreducible $\Q_p$--reflection
groups via $\D \rightsquigarrow (W,L\otimes_{\Z_p} \Q_p)$, the sole identification
being 
$\D_{\Sp(n)}\otimes_\Z \Z_2$  with $\D_{\Spin(2n+1)}\otimes_\Z \Z_2$,
$n\geq3$.
\end{enumerate}
\end{thm}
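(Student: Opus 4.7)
The plan is to derive both parts from the splitting version (Theorem~\ref{rootdata-classification}) together with the classical structure theorem for $\Z$--root data, reducing the classification of $\Z_p$--root data to a case-by-case comparison at the simple Dynkin types for Part~(2).

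For Part~(1), one first applies the splitting statement to write $\D \cong (\D' \otimes_\Z \Z_p) \times \D''$ with $\D''$ exotic. Exotic factors automatically satisfy $\pi_1 = 0$ (as noted after Theorem~\ref{rootdata-classification}), so they serve as simply-connected pieces $\D_i$ directly. For the $\Z$--root datum $\D'$, one invokes the classical structure theorem (the root-datum translation of the structure theorem for compact connected Lie groups), which presents every $\Z$--root datum as a central quotient of a product of simple simply-connected $\Z$--root data and a torus factor of the form $(1, L^W, \emptyset)$. Base change along $\Z \hookrightarrow \Z_p$ is exact and preserves central quotients, so the decomposition passes to the $\Z_p$--setting; recombining with $\D''$ yields the required presentation, with quotient subgroup contained in $\dZ(\D)$ by construction.

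For Part~(2), applying the splitting statement to an irreducible $\Z_p$--root datum $\D$ with $\pi_1(\D) = 0$ forces $\D$ to be either purely exotic---in which case Theorem~\ref{rootdata-classification} directly supplies the bijection with irreducible exotic $\Q_p$--reflection groups---or of the form $\D' \otimes_\Z \Z_p$ with $\D'$ a simple simply-connected $\Z$--root datum, i.e., one of the classical Dynkin types. Under $\D \mapsto (W, L \otimes_{\Z_p} \Q_p)$ these map to the $\Q_p$--reflection groups with character field $\Q$ listed in Table~1. Since rational reflection representations distinguish simple Lie types except for the pair $B_n / C_n$, the only candidate for a non-trivial fiber lies between $\D_{\Sp(n)}$ (of type $C_n$) and $\D_{\Spin(2n+1)}$ (of type $B_n$); and for $n = 1, 2$ the underlying Lie groups already coincide ($\Sp(1) = \Spin(3)$, $\Sp(2) = \Spin(5)$), so the substantive range is $n \geq 3$.

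The main obstacle is to settle the $\Sp/\Spin$ comparison at each prime for $n \geq 3$. At an odd prime $p$, the element $2$ is a unit in $\Z_p$, and a direct check shows $\D_{\Spin(2n+1)} \otimes_\Z \Z_p \cong \D_{\Sp(n)} \otimes_\Z \Z_p$: the coroot lattice of type $B_n$, namely $\{x \in \Z^n : \sum x_i \in 2\Z\}$, becomes all of $\Z_p^n$, and simultaneously the short coroot $\Z_p \cdot 2 e_i$ of $\Spin(2n+1)$ coincides with the short coroot $\Z_p \cdot e_i$ of $\Sp(n)$. Thus the fiber has size $1$ at odd primes. At $p = 2$, the index-$2$ inclusion $\{x : \sum x_i \in 2\Z_2\} \subsetneq \Z_2^n$ is genuine, and any hypothetical $W$--equivariant isomorphism matching the short coroots is forced by $S_n$--symmetry to be multiplication by a fixed element $2u \in 2\Z_2^\times$, whose image lies strictly inside $L_{\Spin(2n+1)} \otimes \Z_2$; hence the two $\Z_2$--root data are genuinely distinct, producing the unique $2$-to-$1$ fiber. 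The rank restriction $n \geq 3$ is essential because for $n = 2$ the outer automorphism of $W_{B_2}$ exchanges the two reflection classes, enabling an identification that fails at higher rank.
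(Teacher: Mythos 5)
Your route is sound, but note that the survey itself contains no proof of this statement: it is quoted from \cite{notbohm96short,DW05,AG08auto,AG09}, where both Theorem~\ref{rootdata-classification} and Theorem~\ref{data-quotient-structure} are extracted from a case-by-case classification of the $W$--invariant $\Z_p$--lattices over each irreducible $\Q_p$--reflection group, the one ingredient recorded in the text being that the mod $p$ reductions of the exotic groups stay irreducible. You instead take Theorem~\ref{rootdata-classification} as a black box and combine it with the classical structure theory of $\Z$--root data and a hands-on $B_n/C_n$ comparison; this is a legitimate and economical way to deduce the structure version (it buys you freedom from redoing any lattice classification, at the cost of importing the full Lie-theoretic statement over $\Z$), and your $2$--adic argument --- a valuation-one scalar has image $2\Z_2^n$, which sits properly inside the $D_n$--lattice --- is the right one. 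Two compressed steps should be made explicit in a full write-up. First, in Part~(2) the splitting theorem only gives $\D \cong \D'\otimes_\Z \Z_p$ with $\D'$ an irreducible $\Z$--root datum, and $\pi_1(\D)=0$ only kills the $p$--part of $\pi_1(\D')$; you need the (easy) extra observation that the simply connected cover $\widetilde{\D}'\to \D'$ has kernel of order prime to $p$ and hence becomes an isomorphism after $\otimes_\Z\Z_p$ --- the same point is what makes the quotient group in your Part~(1) the $p$--primary part of the classical central subgroup. Second, the reduction of a hypothetical isomorphism to ``multiplication by a fixed $2u$'' uses that for $n\geq 3$ the two conjugacy classes of reflections in $W(B_n)$ have different sizes, $n$ and $n(n-1)$, so any root-datum isomorphism preserves each class, permutes the coordinate lines, and must match the $\sigma_i$--coroots $\Z_2e_i$ with $\Z_2\cdot 2e_i$; you only allude to this through the $n=2$ caveat, but it is precisely the content of the rank restriction and should be stated (it also gives an alternative contradiction, since such a scalar sends the coroot $\Z_2(e_i\pm e_j)$ to $2\Z_2(e_i\pm e_j)$). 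With these two points added, the argument is complete.
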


A main ingredient used to derive the classification of root data from the 
classification of
$\Q_p$--reflection groups is the case-by-case observation that the mod $p$ reduction
of all the exotic reflection groups remain irreducible,
which ensures that any lift to $\Z_p$ is uniquely determined by the
$\Q_p$--representation.

\begin{rem}
It seems that $\Z_p$--root data ought to parametrize some purely algebraic
objects, just as $\Z$--root data parametrize both compact connected Lie groups and
reductive algebraic groups. Similar
structures come up in Lusztig's approach to the representation theory of finite
groups of Lie type, as examined by Bessis, Brou\'e, Malle, Michel, Rouquier, and
others \cite{broue01}, involving mythical objects from the Greek
island of Spetses \cite{malle98short}.
\end{rem}

\section{$p$--compact groups and their classification}\label{section:pcg}
In this section we give a brief introduction to $p$--compact groups,
followed by the statement of the classification theorem, an
outline of its proof, and some of its consequences. Additional
background information on $p$--compact groups can be found in the
surveys \cite{dwyer98short,lannes95bourbaki,moller95,notbohm95survey}.

The first ingredient we need is the theory of $p$--completions.
The $p$--completion construction of
Sullivan \cite{sullivan05} produces for each space $X$ a map $X
\to X\pcom$, which, when $X$ is simply connected and of finite type,
has the property that $\pi_i(X\pcom) \cong \pi_i(X) \otimes \Z_p$ for
all $i$. A
space is called $p$--complete if this map is a homotopy
equivalence. In fact, when $X$ is simply connected and $H_*(X;\F_p)$
is of finite type, then $X$ is $p$--complete if and only if the homotopy
groups of $X$ are finitely generated $\Z_p$--modules. We remark that Bousfield--Kan \cite{bk}
produced a variant on Sullivan's
$p$--completion functor, and for the spaces that occur in this paper
these two constructions agree up to homotopy, so the words
$p$--complete and 
$p$--completion can be taken in either sense.

A finite loop space is a triple $(X,BX,e)$, where $BX$ is
a pointed connected space, $X$ is a finite CW--complex, and $e\co X \to
\Omega BX$ is a homotopy equivalence, where $\Omega$ denotes based loops.  
We will return to finite loop spaces in Section~\ref{section:loop},
but now move straight to their $p$--complete analogs.

\begin{defn}[$p$--compact group \cite{DW94}]
 A $p$--compact group is a triple $(X,BX,e)$, where 
 $BX$ is a pointed, connected, $p$--complete
space, $H^*(X;\F_p)$ is finite, and $e\co X \xrightarrow{\simeq} \Omega BX$ is a homotopy equivalence.
\end{defn}

The loop multiplication on $\Omega BX$ is here the homotopical analog
of a group structure; while standard loop multiplication does
not define a group, it is equivalent in a strong sense
(as an $A_\infty$--space) to a topological group, whose
classifying space is homotopy equivalent to $BX$. We therefore baptise $BX$
the {\em classifying space}, and note that, since all structure can be
derived from $BX$,
one could equivalently have defined a $p$--compact group to be a space
$BX$, subject to the above conditions. 
 The finiteness of $H^*(X;\F_p)$ is to be thought of as a homotopical
version of compactness, and replaces the condition that the underlying
loop space be homotopy equivalent to a finite complex. We will usually refer to a
$p$--compact group just by $X$ or $BX$ when there is little possibility
for confusion.

Examples of $p$--compact groups include of course the $p$--completed classifying space $BG\pcom$ of a compact
Lie group $G$ with $\pi_0(G)$ a $p$--group. However, non-isomorphic
compact Lie groups may give rise to equivalent $p$--compact groups if they have the
same $p$--local structure, perhaps the most interesting example being
$B\!\SO(2n+1)\pcom \simeq B\!\Sp(n)\pcom$ for $p$ odd
\cite{friedlander75}. Exotic examples 
(i.e.\ examples with exotic root data) 
are discussed in Section~\ref{section:existence}.

A {\em morphism} between $p$--compact groups is a pointed map $BX \to BY$; it
is called a {\em monomorphism} if the homotopy fiber, denoted $Y/X$,
has finite $\F_p$--homology. Two morphisms are called {\em conjugate} if they are freely
homotopic, and two $p$--compact groups are called {\em isomorphic} if their
classifying spaces are homotopy equivalent.
 A $p$--compact group is called {\em connected} if $X$ is connected. 
By a standard argument $H^*(BX;\Z_p)\otimes \Q$ is seen to be
a $\Q_p$--polynomial algebra, and we define the rank $r = \rank(X)$ to be number
of generators. The following is the main structural result of Dwyer--Wilkerson
\cite{DW94}.
\begin{thm}[Maximal tori and Weyl groups of $p$--compact groups \cite{DW94}] \label{pcgprops}
\InsertTheoremBreak
\begin{enumerate}
\item \label{pcgone} Any $p$--compact group $X$ has a {\em  maximal torus}: a
  monomorphism $i\co BT = (BS^1\pcom)^r \to BX$ with $r$ the rank of $X$. Any
  other monomorphism $i'\co BT' = (BS^1\pcom)^s \to BX$ factors as $i' \simeq
  i \circ \phi$ for some $\phi\co BT' \to BT$. In particular $i$ is
  unique up to conjugacy.
\item \label{pcgtwo} The {\em Weyl space} $\W_X(T)$, defined as the
topological monoid of self-equivalences $BT \to BT$ over $i$ (with $i$
made into a fibration), has contractible components.
\item\label{pcgthree}  If $X$ is connected, the natural action of the {\em Weyl group} $W_X(T) =
\pi_0({\W}_X(T))$  on $L_X =
\pi_2(BT)$ gives a faithful
representation of $W_X$ as a finite $\Z_p$--reflection group.
\end{enumerate}
\end{thm}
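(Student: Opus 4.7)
The plan is to exploit Lannes' $T$-functor to compute mapping spaces $\map(BV,BX)$ for $V$ an elementary abelian $p$-group, and then bootstrap up from this to the theory of tori, Weyl groups, and reflection actions. The foundational input is that $H^*(BX;\F_p)$ is a Noetherian unstable algebra over the Steenrod algebra (coming from finiteness of $H^*(X;\F_p)$ via the Eilenberg--Moore spectral sequence), so that Lannes' theorem applies to yield $H^*(\map(BV,BX)_f;\F_p) \cong T_V H^*(BX;\F_p)$ at the component of any $f\co BV \to BX$. In particular $C_X(V,f) := \Omega\map(BV,BX)_f$ is again a $p$-compact group, setting up an inductive study.

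For part (\ref{pcgone}), I would first combine Lannes theory with Quillen-type stratification of $H^*(BX;\F_p)$ modulo nilpotents to produce a monomorphism $BV \to BX$ with $V$ elementary abelian of maximum rank. Forming $C_X(V)$ and iterating, the construction must stabilize (by a rank/Krull dimension count) at a connected $p$-compact group $P$ admitting no non-trivial elementary abelian monomorphism; I would then show that such a $P$ must be a $p$-compact torus, using absence of elementary abelians to force $H^*(BP;\F_p)$ to be a polynomial algebra on generators of degree $2$. The resulting $T \subseteq P$ is the desired maximal torus, of rank $r = \rank(X)$. For uniqueness, given any other monomorphism $i'\co BT' \to BX$, passing to the discrete approximation of $T'$ brings the problem into the Lannes-theoretic regime; rigidity of maps between classifying spaces of discrete $p$-toral groups then yields the desired factorization through $i$, unique up to conjugacy.

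For part (\ref{pcgtwo}), the claim that $\W_X(T)$ has contractible components reduces to identifying $\map(BT,BX)_i \simeq BT$, i.e.\ the maximal torus is its own centralizer in $X$. Computing the discrete approximation via Lannes theory, one finds the mapping space is homotopy equivalent to $BT$ (maximality prevents the centralizer from being strictly larger); passing to self-equivalences of $BT$ over $i$ collapses this remaining $BT$ factor, leaving only a discrete set of components.

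Finally, for part (\ref{pcgthree}), the Weyl group $W_X = \pi_0(\W_X(T))$ acts on $L_X = \pi_2(BT) \cong \Z_p^r$ via the action on $BT \simeq K(\Z_p,2)^r$. Faithfulness follows from part (\ref{pcgtwo}): an element acting trivially on $\pi_2$ is represented by a self-equivalence of $BT$ homotopic to the identity, hence lies in the trivial component of $\W_X(T)$. Finiteness of $W_X$ would come from a transfer/integration argument showing $H^*(BX;\Q_p) \hookrightarrow H^*(BT;\Q_p)^{W_X}$ is an isomorphism onto a polynomial subring of finite index in $H^*(BT;\Q_p)$. Once $W_X$ is finite and $H^*(BT;\Q_p)^{W_X} \cong H^*(BX;\Q_p)$ is polynomial, the Shephard--Todd--Chevalley theorem forces $W_X$ to act as a reflection group. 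The main obstacle, in my view, is part (\ref{pcgone})---showing the iterative centralizer tower terminates at a $p$-compact \emph{torus}, which requires an intrinsic characterization of $p$-compact tori via the vanishing of non-trivial elementary abelian subgroups together with a tight control of their cohomology.
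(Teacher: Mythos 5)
The paper does not prove this theorem itself: it is quoted from Dwyer--Wilkerson \cite{DW94}, with only the remark that the reflection-group property is seen indirectly from $H^*(BX;\Z_p)\otimes\Q \cong (H^*(BT;\Z_p)\otimes\Q)^{W_X}$. Measured against that cited proof, your general toolkit is the right one -- Lannes' $T$--functor, centralizers of elementary abelian subgroups being again $p$--compact groups, the self-centralizing maximal torus $\map(BT,BX)_i \simeq BT$ for connected $X$, and the endgame for (3) via rational invariant theory plus Shephard--Todd--Chevalley is exactly how Dwyer--Wilkerson (and the paper's remark) obtain the reflection-group statement.

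There is, however, a genuine gap at the heart of part (1). Your centralizer tower is supposed to terminate at a $p$--compact group $P$ ``admitting no non-trivial elementary abelian monomorphism'', which you then claim must be a torus. As stated this is wrong twice over: a $p$--compact torus has plenty of elementary abelian subgroups, while a $p$--compact group with \emph{no} non-trivial morphism $B\Z/p \to BP$ is trivial (Lannes theory produces such a morphism whenever $\widetilde{H}^*(P;\F_p)\neq 0$). The condition you presumably intend -- the tower stabilizes, i.e.\ every elementary abelian subgroup is central -- still does not characterize tori: at $p=2$ the $2$--compact group $\SU(2)\twocom$ has a unique involution, which is central, so every elementary abelian $2$--subgroup is central, yet it is not a torus. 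This is precisely why the proof in \cite{DW94} runs a two-case induction: if some $B\Z/p \to BX$ is non-central one passes to its centralizer (strictly smaller), and if every such map is central one passes to the quotient by a central $\Z/p$ and treats $X$ as a central extension. Moreover, maximality of the resulting torus, the factorization $i'\simeq i\circ\phi$ for an arbitrary torus $T'$, and the finiteness of $W_X$ all rest on the Euler characteristic/homotopy fixed-point machinery ($\chi(X/T)\neq 0$, Smith-theory-type congruences $\chi(Y)\equiv\chi(Y^{h\Z/p}) \bmod p$), none of which appears in your sketch; ``rigidity of maps between classifying spaces of discrete $p$--toral groups'' will not by itself produce the factorization through $i$. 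Finally, in (3) the step ``acts trivially on $\pi_2$, hence lies in the trivial component of $\W_X(T)$'' needs an argument: being homotopic to the identity as a self-map of $BT$ is a priori weaker than being homotopic over $i$; it does follow from $\map(BT,BX)_i\simeq BT$ via the fibration defining the Weyl space, but that deduction should be made explicit, and it is again only available once the self-centralizing property has been established.
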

A short outline of the proof can be found in \cite{lannes95bourbaki}. 
The {\em maximal torus normalizer} is defined as the homotopy orbit space, or Borel
  construction, $B\N_X(T) = BT_{h\W_X(T)}$
and hence sits in a fibration sequence
$$BT \to B\N_X(T) \to B\W_X(T).$$
The normalizer is said to be {\em
  split} if the above fibration has a section.
It is
worth mentioning that one sees that $(W_X,L_X)$ is a $\Z_p$--reflection
group indirectly, by proving that
$$H^*(BX;\Z_p) \otimes \Q \cong (H^*(BT;\Z_p) \otimes \Q)^{W_X}.$$

To define the $\Z_p$--root datum, one therefore needs to proceed differently \cite{DW05,AG08auto,AG09}.
For $p$ odd, the $\Z_p$--root datum $\D_X$ can be defined from the
$\Z_p$--reflection group $(W_X,L_X)$, by setting $\Z_pb_\sigma = \im(L_X \xrightarrow{1 - \sigma} L_X)$.
The definition for $p=2$ is slightly more
complicated, and in order to give meaning to the words we first need a few extra definitions
for $p$--compact groups. 
The centralizer of a morphism $\nu: BA \to BX$
is defined as $B\cC_X(\nu) = \map(BA,BX)_\nu$, where the subscript means the
component corresponding to $\nu$. While this may look odd
at first sight, it does in fact generalize the Lie group notion \cite{DZ87}.
 For a connected $p$--compact group $X$ define the derived $p$--compact group
$\calD X$ to be the covering space of $X$ corresponding to the torsion
subgroup of $\pi_1(X)$.
Consider the {\em $p$--discrete singular torus ${\dT^{\langle \sigma \rangle}}_0$ for
$\sigma$}, i.e., the largest divisible subgroup of the fixed-points
$\dT^{\langle \sigma \rangle}$, with $\dT = L_X \otimes \Z/p^\infty$, and set
$X_\sigma = \calD (\cC_X(\dT_0^{\langle \sigma\rangle}))$. Then $X_\sigma$ is
a connected $p$--compact group of rank
one with $p$--discrete maximal torus $(1-\sigma)\dT$; denote the corresponding maximal torus
normalizer by $\N_\sigma$, called the {\em root subgroup} of $\sigma$. Define the coroots via the formula
$$
\Z_pb_\sigma = \tuborg \im(L_X \xrightarrow{1-\sigma} L_X) & \mbox{if } \N_\sigma
 \mbox{ is split} \\
\ker(L_X \xrightarrow{1+\sigma} L_X) & \mbox{if } \N_\sigma \mbox{ is not
  split}
\sluttuborg
$$
Only the first case happens for $p$ odd, and for $p=2$ the split case corresponds to $BX_\sigma \simeq B\!\SO(3)\twocom$ and
the non-split corresponds to $BX_\sigma \simeq
B\!\SU(2)\twocom$. For comparison we note that when $X =
G\pcom$, for a reductive complex algebraic group $G$, $BX_\sigma \simeq B\langle
U_\alpha,U_{-\alpha}\rangle \pcom$, where $U_\alpha$ is what is ordinarily
called the root subgroup of the root $\alpha = \beta_\sigma$, and the above
formula can be read off from e.g.\
\cite[Pf.~of~Lem.~7.3.5]{spr:linalggrp}.  

We can now state the classification theorem.

\begin{thm}[Classification of $p$--compact groups {\cite{AGMV08,AG09}}] \label{conn-classification}
The assignment which to a connected $p$--compact group $X$ associates
its $\Z_p$--root datum $\D_X$ gives a one-to-one correspondence
between connected $p$--compact groups, up to isomorphism, and
 $\Z_p$--root data, up to isomorphism.

Furthermore the map $\Phi\co \Out(BX)  \to \Out(\D_X)$, given by lifting
a self-homotopy equivalence of $BX$ to $BT$, is an
isomorphism. 
\end{thm}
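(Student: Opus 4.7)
My plan is to establish existence, uniqueness, and bijectivity of $\Phi$ simultaneously by induction on $(\rank(X), |W_X|)$, using Theorem~\ref{data-quotient-structure} to reduce to the case where $\D$ is irreducible and simply connected.

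For existence, decompose $\D = (\D_1 \times \cdots \times \D_n \times (1, L^W, \emptyset))/A$ as in Theorem~\ref{data-quotient-structure}(\ref{dqs-part1}). The central torus factor is realized by $(BS^1\pcom)^{\rk L^W}$; quotients by a finite central subgroup $A \subseteq \dZ$ are realized as Borel constructions once $A$ is interpreted via the $p$-discrete center of the $p$-compact group; and a product decomposition of the root datum lifts (once the $\Phi$-statement is available for smaller groups) to a product decomposition of $X$. It therefore suffices to realize each irreducible simply connected $\Z_p$-root datum. By Theorem~\ref{data-quotient-structure}(\ref{dqs-part2}) these are in bijection with irreducible $\Q_p$-reflection groups, modulo the $\Sp(n)/\Spin(2n+1)$ identification at $p=2$, which is a genuine coincidence of $2$-completed classifying spaces. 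Families 1, 2a, 2b, and 3 of Table~1 are realized by $p$-completing classical compact Lie groups. The sporadic exotic entries are handled case by case: at primes with $p \nmid |W|$ one takes Clark--Ewing's homotopy fixed-point construction $(BT)^{hW}\pcom$; at the remaining primes one uses Sullivan spheres, Zabrodsky mixing, and Aguad\'e's and Notbohm's constructions, together with the Dwyer--Wilkerson space $B\!\DI(4)$ realizing $G_{24}$ at $p=2$.

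For uniqueness together with bijectivity of $\Phi$, consider two $p$-compact groups $X, Y$ with a root datum isomorphism $\varphi\co \D_X \to \D_Y$. The goal is to promote $\varphi$ to an isomorphism $BX \to BY$ (specializing to $X=Y$ gives surjectivity of $\Phi$) and to show such a lift is unique up to homotopy (giving injectivity and the uniqueness half of the classification). The key machinery is the centralizer decomposition
$$ BX \simeq \bigl(\hocolim_{\cA} B\cC_X(\nu)\bigr)\pcom, $$
indexed by the orbit category $\cA$ of non-trivial elementary abelian $p$-subgroups $\nu\co BA \to BX$, and likewise for $Y$. By Theorem~\ref{pcgprops} each centralizer $\cC_X(\nu)$ is a $p$-compact group; its root datum is the ``centralizer'' sub-root-datum of $\D_X$ determined by the image of $A$ in $\dT$, and it is of strictly smaller complexity in the induction ordering. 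Thus $\varphi$ supplies compatible root datum isomorphisms on all centralizers, and the induction hypothesis supplies compatible equivalences of centralizers together with control over their automorphisms. Gluing these into an equivalence $BX \to BY$ amounts to showing that the relevant obstructions in higher limits $\lim^{s}_{\cA} \pi_{s+1}\bigl(\map(BA,BY)_{\varphi\nu}\bigr)$ vanish, and the analogous obstructions to uniqueness of the lift lie in $\lim^{s}\pi_s$. These are attacked via Oliver's vanishing results and a case-by-case analysis driven by the classification of irreducible $\Q_p$-reflection groups.

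The main obstacle will be the final higher-limit computation, particularly at $p=2$, where the distinction between split and non-split root subgroups $\N_\sigma$ (i.e., $B\!\SO(3)\twocom$ versus $B\!\SU(2)\twocom$) must be tracked through the obstruction theory, and for $\DI(4)$ where no Lie-theoretic model is available and one must verify by hand that the centric linking system is uniquely determined by the fusion data. Controlling these obstructions uniformly is what forces the case-by-case nature of the argument and constitutes the bulk of the work in \cite{AGMV08,AG09}.
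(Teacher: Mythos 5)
Your overall architecture---a simultaneous induction, existence via the known constructions, uniqueness via the centralizer decomposition plus higher-limit obstructions---does match the shape of the actual proof, but an essential ingredient is missing: the paper's Step 1, in which one constructs, purely algebraically from $\D$, a maximal torus normalizer $\N_\D$ together with its root subgroups $\{\N_\sigma\}$ (split for $p$ odd; for $p=2$ via Dwyer--Wilkerson's reworking of Tits), identifies it with the topologically defined one, and shows that $\Phi$ factors as $\Out(BX)\to\Out(B\N,\{B\N_\sigma\})\xrightarrow{\cong}\Out(\D_X)$. This is what makes the comparison of $X$ and $X'$ possible at all: you assert that a root-datum isomorphism ``supplies compatible root datum isomorphisms on all centralizers,'' but without the common $(B\N,\{B\N_\sigma\})$ mapping to both $BX$ and $BX'$ there is no identification of the two Quillen categories and no coherent choice of equivalences $B\cC_X(\nu)\to B\cC_{X'}(\nu)$; the paper conjugates each $\nu\co B\Z/p\to BX$ into $T$, uniquely up to $\N$-conjugacy, and transports everything through $B\N$. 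The split/non-split dichotomy ($\SO(3)\twocom$ versus $\SU(2)\twocom$) enters exactly here, to cut the too-large $\Out(B\N)$ down to $\Out(\D_X)$, not merely as bookkeeping inside the final obstruction theory; and the compatibility step producing the element of $\lim^0$ (which reduces to the rank-two discrete-centralizer case, occurring only for $\D\cong\D_{\PU(p)\pcom}$) is a genuine step of the proof that your sketch passes over.

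A second gap is the reduction: you reduce to irreducible \emph{simply connected} root data, but the centralizer-decomposition induction needs \emph{simple, center-free} groups. If $X$ has a central element of order $p$, its centralizer is $X$ itself, so ``strictly smaller complexity'' fails already for $\D$ of type $\SU(p)$; the paper's Step 2 therefore reduces to center-free quotients using the center and product-splitting theorems and recovers covers and quotients afterwards (which is also why the $\lim^0$-element has to be produced over $B^2\pi_1(\D)$). Smaller inaccuracies: the Clark--Ewing spaces are $p$-completed homotopy \emph{orbit} spaces of $W$ acting on $K(L,2)$, not homotopy fixed points; the rigidification obstructions are higher limits over $\A(X)$ of $E\mapsto\pi_i(\cZ\cC_{X'}(E))$, i.e.\ of the centers of the centralizers (equivalently $\pi_i$ of $\map(B\cC_X(\nu),BX')$ at the chosen component), not of $\pi_{i+1}(\map(BA,BX'))$; and in \cite{AG09} the Lie-group cases are finished by passing to the $p$-radical orbit category, where the vanishing is Jackowski--McClure--Oliver's, rather than by a fresh case-by-case computation over $\A(X)$.
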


Here $\Out(BX)$ denotes the group of free homotopy classes of self-homotopy
equivalences $BX \to BX$, and $\Out(\D_X)= \Aut(\D_X)/W_X$. A stronger space-level statement about self-maps
is in fact true, namely 
\begin{equation}
B\!\Aut(BX) \xrightarrow{\simeq} ((B^2\dZ(\D_X))\pcom)_{h\Out(\D_X)}
\end{equation}
where $\Aut(BX)$ is the space of self-homotopy equivalences, $\dZ(\D_X)$ the
$p$--discrete center of $\D_X$ as introduced in Section~\ref{section:root}, and the action of $\Out(\D_X)$ on
$(B^2\dZ(\D_X))\pcom$ is the canonical one. Having control of the whole space
of self-equivalences turns out to be important in the proof.

Theorem~\ref{conn-classification} implies, by
Theorem~\ref{rootdata-classification}, that any connected $p$--compact
group splits as a product of the $p$--completion of a compact connected Lie group
and a product of known exotic $p$--compact groups. For $p=2$ it
shows that there is only one exotic $2$--compact group, the one corresponding to the
$\Q_2$--reflection group $G_{24}$, and this $2$--compact group was constructed
in \cite{DW93}. We will return to the construction of the exotic $p$--compact groups
in the next subsection.

Since we understand the whole space of self--equivalences, one can derive a
classification also of non-connected $p$--compact groups. The set of
isomorphism classes of non-connected $p$--compact groups with root datum of
the identity component $\D$ and group of components $\pi$, is parametrized by
the components of the
moduli space
\begin{equation}
\map(B\pi,((B^2\dZ(\D))\pcom)_{h\Out(\D)})_{h\Aut(B\pi)}
\end{equation}

As with the classification of compact Lie groups, the classification statement can
naturally be broken up into two parts, existence and uniqueness of
$p$--compact groups. The
uniqueness statement can be formulated as an {\em isomorphism theorem} saying that there is a $1$-$1$--correspondence between
conjugacy classes of isomorphisms of connected $p$--compact groups $BX \to BX'$ and
isomorphisms of root data $\D_X \to \D_{X'}$, up to
$W_{X'}$--conjugation. This last statement can in fact be strengthened to an {\em isogeny
theorem} classifying maps that are rational isomorphisms \cite{AG10isogeny}.

While the existence and uniqueness are
separate statements, they are currently most succinctly proved
simultaneously by an induction on the size of $\D$, since the proof of
existence requires knowledge of certain facts about self-maps, and the proof
of uniqueness at the last step is aided by specific facts about concrete
models. We will discuss the proof of existence in
Section~\ref{section:existence} and of uniqueness in
Section~\ref{section:uniqueness}, along with some information about the
history.

\subsection{Construction of $p$--compact groups}\label{section:existence} 
Compact connected Lie groups can be constructed in different
ways. They can be exhibited as symmetries of geometric
objects, or can be systematically constructed via
generators-and-relations type constructions that involve first constructing a finite
dimensional Lie algebra from the root system, and then passing to the group \cite{kac90,spr:linalggrp}.

An adaptation of the above tools to $p$--compact groups is still
largely missing, so one currently has to proceed by more ad hoc
means, with the limited aim of constructing only the exotic $p$--compact
groups. These were in fact
already constructed some years ago, but we take the opportunity here
to retell the tale, and outline the closest we currently get to a streamlined construction.

The first exotic $p$--compact groups were constructed by Sullivan
\cite{sullivan05} as the homotopy
orbit space of the action of the would-be Weyl group on the
would-be torus. The most basic case he observed is
the following: If $C_m$ is a cyclic group of order $m$, and $p$
an odd prime such that $m | p-1$, then $C_m \leq \Z_p^\times$, and hence $C_m$ acts on the
Eilenberg--MacLane space $K(\Z_p,2)$. The Serre spectral sequence for the fibration 
$$K(\Z_p,2) \to
K(\Z_p,2)_{hC_m} \to BC_m $$
reveals that the $\F_p$--cohomology of $K(\Z_p,2)_{hC_m}$ is a
polynomial algebra on a class in degree $2m$, using that $m$ is prime to $p$. Therefore the cohomology of its loop space is an exterior algebra
in degree $2m-1$ and
$BX = (K(\Z_p,2)_{hC_m})\pcom$ is a $p$--compact group, with $\Omega BX \simeq
(S^{2m-1})\pcom$. We have just realized all exotic groups in family 3 of
Table~1!

Exactly the same argument carries over to the general case of a root
datum $\D$ where $p \nmid \lvert W\rvert$, just replacing $C_n$ by $W$ and $\Z_p$
by $L$, since
 $\F_p[L\otimes \F_p]^W$ is a polynomial
algebra exactly when $W$ is a reflection group, when $p \nmid \lvert W\rvert $, by the
Shephard--Todd--Chevalley theorem used earlier.
 This observation was made by
Clark--Ewing \cite{CE74}, and realizes a large number of
groups in Table~1. However, the method as it
stands cannot be pushed further, since the assumption that
 $p \nmid \lvert W\rvert $ is crucial for the collapse of the Serre spectral sequence.

Additional exotic $p$--compact groups were constructed in the 1970s by other methods.
Quillen realized $G(m,1,n)$ at all possible primes by constructing an
approximation via classifying spaces of discrete groups
\cite[\S10]{quillen72}, 
and Zabrodsky \cite[4.3]{zabrodsky84} realized $G_{12}$
and $G_{31}$ at $p=3$ and $5$ respectively, by taking
homotopy fixed-points of a $p'$--group acting on the classifying space of a
compact Lie group.

To build the remaining exotic $p$--compact
groups one needs a far-reaching generalization of Sullivan's technique, obtained by
replacing the homotopy orbit space with a more sophisticated homotopy
colimit, that ensures that we still get a collapsing spectral sequence even when $p$ divides the order of
$W$. The technique was introduced by Jackowski--McClure \cite{JM92}, as
a decomposition technique in terms of centralizers of elementary abelian
subgroups, and was subsequently used by Aguad\'e
\cite{aguade89} ($G_{12}, G_{29}, G_{31}, G_{34}$), 
Dwyer--Wilkerson \cite{DW93} $(G_{24})$, and
Notbohm--Oliver \cite{notbohm98} ($G(m,s,n)$) to finish the construction of
the exotic $p$--compact groups.

The following is an extension of Aguad\'e's argument, and can be used
inductively to realize all exotic $p$--compact groups for $p$
odd---that this works in all cases relies on the stroke of luck, checked case-by-case, that
all exotic finite
$\Z_p$--reflection groups for $p$ odd have $\Z_p[L]^W$ a
polynomial algebra.

\begin{thm}[Inductive construction of $p$--compact groups, $p$ odd
  {\cite{AGMV08}}] \label{realization}
Con\-sider a finite
  $\Z_p$--reflection group $(W,L)$, $p$ odd,
  with $\Z_p[L]^W$ a polynomial algebra.

Then $(W_V,L)$ is a again a $\Z_p$--reflection group and
  $\Z_p[L]^{W_V}$ a polynomial algebra, for $W_V$
  the pointwise stabilizer in $W$ of $V \leq L \otimes \F_p$. 

Assume that, for all non-trivial $V$, $(W_V,L)$ is realized by a connected
$p$--compact group $Y_V$ satisfying
the isomorphism part of Theorem~\ref{conn-classification} and
 $H^*(Y_V;\Z_p) \cong \Z_p[L]^{W_V}$ (with $L$ in degree $2$).
Then $V \mapsto Y_V$ extends
to a functor 
$Y\co \A^{op} \to \Spaces$,  where $\A$ has objects
 non-trivial $V \leq L \otimes \F_p$ and morphisms given
by conjugation in $W$, and $$BX =  (\hocolim_{\A^{op}} Y)\pcom$$ is a $p$--compact group
with Weyl group $(W,L)$ and $H^*(BX;\Z_p) \cong \Z_p[L]^W$.
\end{thm}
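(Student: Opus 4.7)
The plan breaks into three pieces: (i) verify the structural statement about $W_V$; (ii) rigidify the assignment $V\mapsto Y_V$ into a strict functor $\A^{op}\to\Spaces$; and (iii) compute the cohomology of the homotopy colimit and deduce that its $p$--completion is a $p$--compact group with the asserted invariants. Step~(ii) is where the main work sits.

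For (i), polynomiality of $\Z_p[L]^W$ rationalizes to polynomiality of $\Q_p[L\otimes\Q_p]^W$, so by Steinberg's fixed-point theorem---applied over $\bbC$ via any embedding $\Q_p\hookrightarrow\bbC$---the pointwise stabilizer in $W$ of any $\Q_p$--subspace of $L\otimes\Q_p$ is a reflection group with polynomial invariants. Because $p$ is odd, the pointwise stabilizer of $V\le L\otimes\F_p$ agrees with that of any $\Z_p$--lift of $V$: an element of $W$ acting trivially on $V$ has prime-to-$p$ order on the corresponding submodule of $L$ and hence acts trivially there. Polynomiality of $\Z_p[L]^{W_V}$ then descends from the $\Q_p$--statement, using that an integrally closed graded $\Z_p$--subalgebra whose rationalization and mod-$p$ reduction are both polynomial is itself polynomial.

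For (ii), a morphism $V\hookrightarrow V'$ in $\A$ should be realized by a centralizer map $BY_{V'}\to BY_V$: given a suitable $\nu\co B(V'/V)\to BY_V$ lifting the maximal torus, $B\cC_{Y_V}(\nu)$ is a connected $p$--compact group with Weyl group $(W_{V'},L)$, hence by the inductive hypothesis (the isomorphism part of Theorem~\ref{conn-classification} for the smaller datum) isomorphic to $BY_{V'}$. Assembling these arrows into a coherent strict diagram would then proceed via Dwyer--Kan--Wojtkowiak obstruction theory: the obstructions to existence and uniqueness of the rigidification live in higher limits of the form $\varprojlim\nolimits_{\A^{op}}^{i+1}\pi_i\Aut(BY_{(-)})$, which by the inductive space-level identification
\[
B\!\Aut(BY_V)\simeq\bigl((B^2\dZ(\D_{Y_V}))\pcom\bigr)_{h\Out(\D_{Y_V})}
\]
reduce to higher limits of concrete coefficient systems on $\A$. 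Their vanishing, established by the Jackowski--McClure--Oliver centralizer-decomposition machinery, is the technical core of the argument; it is here that the hypothesis ``$p$ odd'' is essential, and it is also here that control of the entire self-equivalence space $B\!\Aut(BY_V)$---not merely of $\Out(\D_{Y_V})$---is indispensable.

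For (iii), the Bousfield--Kan spectral sequence
\[
E_2^{s,t}=\varprojlim\nolimits_{\A^{op}}^{s} H^t(Y_V;\F_p)\ \Longrightarrow\ H^{s+t}(\hocolim\nolimits_{\A^{op}} Y;\F_p)
\]
collapses to its $s=0$ edge: each $H^*(Y_V;\F_p)\cong\F_p[L]^{W_V}$ is polynomial, and a Stanley--Solomon type identity for reflection groups with polynomial invariants exhibits $\F_p[L]^W$ as exactly the inverse limit of these systems over $\A$, while the higher limits vanish by the same class of results invoked in (ii). Hence $H^*(BX;\F_p)\cong\F_p[L]^W$ is polynomial on $r$ generators in the prescribed degrees, so the loop space $X=\Omega BX$ has finite $\F_p$--cohomology (exterior on generators in degrees $2d_i-1$) and $BX$ is $p$--complete and simply connected. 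This certifies $BX$ as a $p$--compact group, and the induced $W$--action on the maximal torus $L\otimes B\Z/p^\infty$ recovers $(W,L)$ as its Weyl group.
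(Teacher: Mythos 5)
Your steps (ii) and (iii) follow essentially the route of \cite{AGMV08}: set up the diagram in the homotopy category using centralizers and the inductive isomorphism statement, rigidify by Dwyer--Kan obstruction theory with obstructions in higher limits over $\A$ that vanish by the Jackowski--McClure Mackey-functor theorem, and compute $H^*$ of the homotopy colimit by the Bousfield--Kan spectral sequence with the same vanishing. One correction there: the obstruction coefficients are not $\pi_i\Aut(BY_{(-)})$, which is not even a functor on $\A$; the point of verifying that the diagram is \emph{centric} is precisely that the relevant mapping spaces have the homotopy type of classifying spaces of centers, so the obstruction groups become higher limits of a fixed-point (Mackey) functor built from the centers of the centralizers. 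Your appeal to $B\!\Aut(BY_V)\simeq((B^2\dZ(\D_{Y_V}))\pcom)_{h\Out(\D_{Y_V})}$ gestures at this, but without the centricity step the obstruction theory you invoke does not get off the ground.

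The genuine gap is in your step (i). The claim that the pointwise stabilizer $W_V$ of $V\leq L\otimes\F_p$ coincides with the pointwise stabilizer of a $\Z_p$--lift of $V$ is false, and with it the reduction to the characteristic-zero Steinberg theorem collapses. For example, take $W=\Sigma_p$ acting on the $\SU(p)$--lattice $L\cong\Z_p^{p-1}$: a $p$--cycle fixes a nonzero line in $L\otimes\F_p$ (the image of the all-ones vector) but fixes no nonzero vector of $L\otimes\Q_p$, so $W_V$ contains elements of order $p$ that stabilize no lift of $V$; in particular your assertion that elements of $W_V$ have prime-to-$p$ order is wrong, and it is exactly these mod-$p$ fixed points that make the statement a positive-characteristic phenomenon. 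Moreover, even for a genuine reflection subgroup, polynomiality of $\Q_p[L\otimes\Q_p]^{W_V}$ does not formally yield polynomiality of $\Z_p[L]^{W_V}$ when $p$ divides $\lvert W_V\rvert$ (Shephard--Todd--Chevalley fails integrally in general), and your descent criterion presupposes knowing the mod-$p$ invariants are polynomial, which is the point at issue. This is why the paper proves this step as an extension of Steinberg's fixed-point theorem in Nakajima's version, using Lannes' $T$--functor together with case-by-case considerations; it is not a formal consequence of the rational statement.
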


\begin{proof}[Idea of proof] The statement that $\Z_p[L]^{W_V}$ is a polynomial
  algebra is an extension of Steinberg's fixed-point theorem in the version of Nakajima
  \cite[Lem.~1.4]{nakajima79}. The proof uses Lannes'
  $T$--functor, together with case-by-case considerations.

The inductive construction is straightforward, given
current technology, and uses only general arguments: Since we assume
we know $Y_V$ and its automorphisms for all $V \neq 1$, one easily sets up
a functor $\A^{op} \to \ho{\Spaces}$, the homotopy category of spaces, and the task is to rigidify this to a functor in
  the category of spaces. The  diagram can be show to be
  ``centric'', so one can use the obstruction theory developed by
  Dwyer--Kan in \cite{DK92}. The relevant obstruction groups identify
  with the higher limits of a functor obtained by taking fixed-points,
  and in particular form a Mackey functor whose higher
  limits vanish by a theorem of Jackowski--McClure \cite{JM92}. 
We can therefore rigidify the diagram to a
diagram in spaces, and the resulting homotopy colimit is easily shown to have
the desired cohomology.
\end{proof}

We now turn to the prime
$2$. Here the sole exotic $\Z_2$--reflection group is $G_{24}$, 
and the corresponding $2$--compact group was realized by Dwyer--Wilkerson \cite{DW93}
and dubbed $\DI(4)$, due to the fact that, for $E = (\Z/2)^4$,
$$H^*(B\!\DI(4);\F_2) \cong \F_2[E]^{\GL(E)}$$
the rank four Dickson invariants. At first glance this might look like
the setup of Theorem~\ref{realization}, but note that $G_{24}$ is a rank
three $\Z_2$--reflection group, not four, so $E$ is not just the elements
of order $2$ in the maximal torus. However by taking $\A$ to be the
category with objects the non-trivial subgroups of $E$, and morphisms induced by conjugation in $\GL(E)$, and correctly guessing the centralizers of elementary abelian subgroups, the
argument can still be pushed through; the starting point is declaring the
centralizer of any element of order two to be $\Spin(7)\twocom$.

We again stress the apparent luck in being able to guess the rather
uncomplicated structure of $\A$ and the centralizers.
If one hypothetically had to
construct an exotic $p$--compact group with a seriously complicated
cohomology ring, say one would try to construct $E_8$ at the prime $2$ by
these methods, it would not be clear how to start. As
a first step one would need a way to describe the $p$--fusion in the group, just from
the root datum $\D$. This relates to old questions in Lie theory, which
have occupied Borel, Serre, and many others \cite{serre00bourbaki}\ldots

\subsection{Uniqueness of $p$--compact groups}\label{section:uniqueness}
In this subsection, we outline the proof of the uniqueness part of
the classification theorem for $p$--compact groups,
Theorem~\ref{conn-classification}, following \cite{AG09} by Andersen
and the author; it extends \cite{AGMV08} also with M{\o}ller and Viruel. 
We mention that the quest for uniqueness was initiated by Dwyer--Miller--Wilkerson \cite{DMW86}
in the 80s and in particular Notbohm \cite{notbohm94} obtained strong partial
results; a different approach for $p=2$ using
computer algebra was independently given by M{\o}ller
\cite{moller07del1,moller07del2}. See \cite{AGMV08,AG09} for more details on
the history of the proof.

From now on we consider two connected $p$--compact
groups $X$ and $X'$ with the same root datum $\D$, and want to build
a homotopy equivalence $BX \to BX'$.
The proof goes by an induction on the size of $(W,L)$.

\smallskip\noindent
{\em Step $1$: (The maximal torus normalizer and its automorphisms, \cite{DW05,AG08auto}).}
A first step is to show that $X$ and $X'$ have isomorphic maximal torus
normalizers. Working with the maximal torus normalizer has a number of technical
advantages over the maximal torus, related to the fact that the fiber of the map
$B\N \to BX$ has Euler characteristic prime to $p$ (one, actually).

One shows that the maximal torus normalizers are isomorphic, by giving a {\em
  construction} from the root datum. For $p$ odd
the construction is simple, since the maximal torus normalizer turns out
always to be split, and hence isomorphic to $(B^2L)_{hW}$ with the canonical
action.
This was established in \cite{kksa:thesisshort}, by showing that the relevant
extension group is zero except in one case, which can be handled by
other means; cf.\ also \cite[Rem.~2.5]{AGMV08}. For $p=2$, the problem is more
difficult. The corresponding problem for compact Lie groups, or reductive
algebraic groups, was solved by Tits \cite{tits66} many years ago. A
thorough reading of Tits' paper, with a cohomological rephrasing of
some of his key constructions, allows his construction to
be pushed through also for $p$--compact groups
\cite{DW05}. One thus algebraically constructs a maximal torus normalizer
$\N_\D$ and show it to be isomorphic to the topologically defined one. A
problem is however that $\N$ in general has too many automorphisms. To
correct this, it was shown in \cite{AG08auto} that the root subgroups
$\N_\sigma$,
introduced before Theorem~\ref{conn-classification}, can also be built algebraically, and
adding this extra data give the correct automorphisms. Concretely, one has a
canonical factorization 
$$\Phi\co \Out(BX) \to \Out(B\N,\{B\N_\sigma\}) \xrightarrow{\cong} \Out(\D_X)$$
and one can furthermore build a candidate model
for the whole space $B\!\Aut(BX)$,  by a
slight modification of $B\!\Aut(B\N,\{B\N_\sigma\})$, the space of
self-homotopy equivalences of $B\N$ preserving the root subgroups.

\smallskip\noindent
{\em Step $2$: (Reduction to simple, center-free groups, \cite[\S2]{AG09}).} This
next step involves relating the $p$--compact group and its summands and center-free
quotient via certain fibration sequences, and studying
automorphisms via these fibrations.
 Several of
the necessary tools, such as the understanding of the center of a
$p$--compact group \cite{dw:center}, the product splitting theorem \cite{dw:split}, etc., were
already available in the 90s. But, in particular for $p=2$, one needs to
incorporate the machinery of root data and root
subgroups; we refer the reader to \cite[\S2]{AG09} for the details.

\smallskip\noindent
{\em Step $3$: (Defining a map on centralizers of elements of order $p$, {\cite[\S4]{AG09}}).}
We now assume that $X$ and $X'$ are simple, center-free $p$--compact
groups. The next tool needed is a homology decomposition theorem, more
precisely the
centralizer decomposition, of Jackowski--McClure \cite{JM92} and Dwyer--Wilkerson
\cite{DW92}, already mentioned in the previous subsection. Let $\A(X)$ be the Quillen category of $X$ with objects monomorphisms $\nu\co BE
\to BX$, where $E = (\Z/p)^s$ is a non-trivial elementary abelian $p$--group,
and the morphisms $(\nu\co BE
\to BX) \to
(\nu'\co BE' \to BX)$ are the group monomorphisms $\phi\co E \to E'$ such that $\nu' \circ B\phi$ is
conjugate to $\nu$. The centralizer decomposition
theorem now says that for any $p$--compact group $X$, the evaluation map
$$\hocolim_{\nu \in \A(X)^{\op}} B\cC_X(\nu) \to BX$$
is an isomorphism on $\F_p$--cohomology.

This opens the possibility for a proof by induction, since the centralizers
will be smaller $p$--compact groups if $X$ is
center-free. 
As explained above we can assume that $X$ and $X'$ have common
maximal torus normalizer and root subgroups
$(\N,\{\N_\sigma\})$, so that we are in the situation of
following diagram
$$\xymatrix{
 &(B\N,\{B\N_\sigma\}) \ar[dl]_j \ar[dr]^{j'}\\
BX \ar@.[rr] & & BX'}$$
where the dotted arrow is the one we want to construct.

If $\nu\co B\Z/p \to BX$ is a monomorphism, then it can
be conjugated into $T$, uniquely up to conjugation in $\N$. This
gives a well defined way of viewing $\nu$ as a map $\nu\co B\Z/p \to
BT \to B\N$. 
Taking centralizers of this map produces a new diagram
$$\xymatrix{
& (B\cC_\N(\nu),\{B\cC_\N(\nu)_\sigma\}) \ar[dr] \ar[dl]\\
B\cC_X(\nu) \ar@.[rr]&&B\cC_{X'}(\nu)}$$

One now argues that the induction hypothesis guarantees
that we can construct the dotted arrow. There is the slight twist that the
centralizer will be disconnected in general, so we have to use that we inductively know the whole space of
self-equivalences of the identity component.

\smallskip\noindent
{\em Step $4$: (Compatibility of maps on all centralizers, \cite[\S5]{AG09}).}
The next step is to define the map on centralizers of arbitrary elementary
abelian $p$--subgroups $\nu\co BE \to BX$. This is done by restricting to a
rank one subgroup $E' \leq E$ and considering the composition
$$B\cC_X(\nu) \to B\cC_X(\nu|_{E'}) \to B\cC_{X'}(\nu|_{E'}) \to BX'.$$
One now has to show that these maps do not depend on the choice of $E'$, and that they fit together to define an element in
$$\lim_{\nu \in \A(X)}\!\!\!\!{}^0[B\cC_X(\nu),BX']$$
By using induction it turns out that one can reduce to the case where $E$ has rank two and
$\cC_X(\nu)$ is discrete. An inspection of the classification of
$\Z_p$--root data shows that this case only occurs for $\D \cong
\D_{\PU(p)\pcom}$, which can then be handled by direct arguments, producing the
element in $\lim^0$.

In fact one can prove something slightly stronger, which will be needed in the
next step: A close inspection of the
whole preceeding argument reveals that all maps can be constructed over $B^2\pi_1(\D)$, which allows one to
produce an element in
$$\lim_{\nu \in \A(X)}\!\!\!\!{}^0[B\widetilde{\cC_X(\nu)},B\widetilde{X'}]$$
where the tilde denotes covers with respect to the kernel of
the map to $\pi_1(\D)$.

With this step complete one can see that $BX$ and $BX'$ have the same $p$--fusion,
i.e., that $p$--subgroups are conjugate in the same way, but we are
left with a rigidification issue.

\smallskip\noindent
{\em Step $5$: (Rigidifying the map, \cite[\S6]{AG09}).}
One now wants to define a map on the whole homotopy colimit, which can then
easily be checked to have the correct properties, finishing the proof of the classification.
Constructing such a map directly from an element in $\lim^0$ requires
knowing that the higher limits of the functors  $F_i\co\A(X) \to
\Z_p\mbox{-}\operatorname{mod}$ given by $E \mapsto \pi_i(\cZ\cC_X(E))$,
vanish, where $\cZ$ denotes the center.
In turn, this calculation requires knowing the structure of $\A(X)$, and for this
we use that $X$ is a known $p$--compact group, where we can examine the
structure. For the part of the functor corresponding to elementary abelian
subgroups that can be conjugated into $T$, the higher limits can be show to vanish via a Mackey
functor argument, going back to \cite{JM92} and \cite{DW92}. This in fact equals the whole functor for all exotic groups for $p$ odd, and
$\DI(4)$ also works via a variant on this argument, which finish off
those cases.

We can hence assume that $X$ is the $p$--completion of a compact connected Lie group $G$.
Here the obstruction groups were computed to identically vanish in \cite{AGMV08}, for $p$
odd, relying on detailed information about
the elementary abelian $p$--subgroups of $G$, partially tabulated by Griess
\cite{griess}. This is easy when there is little torsion in the cohomology, but
harder for the small torsion primes, and the exceptional groups.
In \cite{AG09}, however, we use a different argument to cover all primes.
 Using the above element in $\lim^0$
it turns out that one can produce an element in
$$\lim_{\tilde G/\tilde P \in \bO^r_p(\tilde G)^{\op}}\!\!\!\!\!\!\!\!\!{}^0[B\tilde P,B\widetilde{X'}]$$
where $\bO^r_p(\tilde G)$ is the subcategory of the orbit category of $G$ with
objects the so-called $p$--radical subgroups. Here one again wants to show
vanishing of the higher limits, in order to get a map on the homotopy colimit. Calculating higher limits over this orbit category is in many ways
similar to calculating it over the Quillen category \cite{grodal02}. In this
case, however, the relevant higher limits were in fact shown to identically vanish in earlier work of
Jackowski--McClure--Oliver \cite{JMO92}, also building on substantial
case-by-case calculations.
This again produces a map $B\tilde G \xrightarrow{\simeq} B\widetilde{X'}$, and
passing to a quotient provides the sought homotopy
equivalence  $BG \xrightarrow{\simeq} BX'$. The
statements about self-maps also fall out of this approach.\qed

\subsection{Lie theory for $p$--compact groups}\label{section:lietheory}
We have already seen many Lie-type results for $p$--compact groups.
Quite a few more can be proved by observing
that the classical Lie result only depends on the $p$--completion of
the compact Lie group, and
verifying case-by-case that it holds for the exotic $p$--compact groups.
We collect some theorems of this type in this section, encouraging the
reader to look for more conceptual proofs, and include also a brief discussion of
homotopical representation theory. Throughout this section $X$ is a
connected $p$--compact group with maximal torus $T$.

The first theorem on the list is the analog of theorems of Bott \cite{bott56}
from $1954$.

\begin{thm}\label{homogeneous} 
$H^*(X/T;\Z_p)$ and $H^*(\Omega X;\Z_p)$ are both torsion free and
  concentrated in
even degrees, and  $H^*(X/T;\Z_p)$ has rank $\lvert W_X\rvert $ as a $\Z_p$--module.
\end{thm}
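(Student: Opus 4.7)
The plan is to invoke the classification theorem, Theorem~\ref{conn-classification}, and reduce the problem to two standard cases. By Theorem~\ref{rootdata-classification}\eqref{rdc-part1}, the root datum $\D_X$ splits as $\D_1\otimes_\Z\Z_p$ times a product of exotic $\Z_p$-root data, so $BX$ is isomorphic to $BG\pcom$ times a product of classifying spaces of simple exotic $p$-compact groups, where $G$ is a compact connected Lie group. The maximal torus and its Weyl group split accordingly (Theorem~\ref{pcgprops}), as do the fibration $T\to X \to X/T$ and the loop space $\Omega X$. Since $H^*(-;\Z_p)$ obeys a K\"unneth formula when one factor is $\Z_p$-free, torsion-freeness, even concentration, and the rank formula all descend to the question of verifying the theorem on each factor.

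For the Lie factor $X = G\pcom$, both statements are Bott's 1954 theorems: $G/T$ carries a CW structure with exactly $|W_G|$ cells, all of even real dimension, coming from the Bruhat decomposition of the associated complex flag variety (or from Morse theory on $G/T$), and $\Omega G$ likewise admits a cellular model with only even cells via the Morse theory of minimal geodesics. Tensoring with $\Z_p$ preserves both conclusions.

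For an exotic factor at odd $p$, the inductive construction of Theorem~\ref{realization} gives $H^*(BX;\Z_p)\cong \Z_p[L]^W$, a polynomial $\Z_p$-algebra on generators of even degrees $2d_1,\ldots,2d_r$. A Steinberg--Nakajima style argument, already used in the proof of Theorem~\ref{realization}, shows that $\Z_p[L]$ is free over $\Z_p[L]^W$ of rank $|W|$, so the Eilenberg--Moore spectral sequence for $X/T \to BT \to BX$ collapses at
$$E_2 = \Tor_{H^*(BX;\Z_p)}(H^*(BT;\Z_p),\Z_p)\cong \Z_p[L]\otimes_{\Z_p[L]^W}\Z_p,$$
a torsion-free $\Z_p$-module of rank $|W|$ concentrated in even degrees. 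For $\Omega X = \Omega^2 BX$, two successive transgressions in the path--loop fibration give $H^*(X;\Z_p)$ as an exterior algebra on classes of degree $2d_i-1$ and then $H^*(\Omega X;\Z_p)$ as a polynomial (or divided-power) algebra on classes of even degree $2d_i-2$, in particular torsion-free and even.

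The only remaining factor is the exotic $2$-compact group $\DI(4)$, handled directly from its construction in \cite{DW93}: one confirms that $H^*(B\!\DI(4);\Z_2)$ is polynomial on classes of degrees $8,12,14$ lifting the rank-four Dickson invariants, after which the same Eilenberg--Moore and transgression arguments apply. The principal obstacle to a clean proof is the absence of a conceptual, uniform argument: the integral freeness of $\Z_p[L]$ over $\Z_p[L]^W$, the collapse of the Eilenberg--Moore spectral sequence, and the integral polynomiality for $\DI(4)$ all currently reduce to case-by-case verification using Table~1, precisely the kind of grind that motivates the author's call for more conceptual proofs.
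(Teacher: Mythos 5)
Your overall route---use the classification and the splitting of $\Z_p$--root data to reduce to a compact Lie factor and exotic factors, quote Bott for the Lie factor, and verify the exotic factors case by case via the polynomiality of $\Z_p[L]^W$---is the same strategy the paper has in mind, and for the exotic factors at odd primes your Eilenberg--Moore argument is essentially the intended verification (modulo the usual convergence points, and the fact that freeness of $\Z_p[L]$ over $\Z_p[L]^W$ is itself part of the case-by-case check).

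The genuine gap is your treatment of $\DI(4)$. $H^*(B\!\DI(4);\Z_2)$ is \emph{not} a polynomial algebra on classes of degrees $8,12,14$ lifting the Dickson invariants: the rank-four Dickson algebra is $\F_2[c_8,c_{12},c_{14},c_{15}]$ with generators in degrees $8,12,14,15$, and $\Sq^1 c_{14}=c_{15}$, so the odd-degree generator does not lift and $H^*(B\!\DI(4);\Z_2)$ contains $2$--torsion; rationally the generators sit in degrees $8,12,28$ (twice the degrees $4,6,14$ of $G_{24}$ in Table~1---note also that $\DI(4)$ has rank three, so your proposed degree list is inconsistent even with the rational computation). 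Consequently the transgression/Eilenberg--Moore scheme you apply to the odd-prime exotics breaks down for this factor: $H^*(\DI(4);\F_2)\cong \F_2[x_7]/(x_7^4)\otimes\Lambda(x_{11},x_{13})$ is not an exterior algebra on classes of degrees $2d_i-1$, and $H^*(\DI(4);\Z_2)$ itself is not torsion free, so neither half of the statement for this factor follows from your argument. The $\Omega X$ half can in fact be obtained without any classification input, since Lin and Kane's solution of the loop space conjecture \cite{lin82} applies to every $p$--compact group (this is how the paper views that half, and it covers $\Omega\DI(4)$); but the $X/T$ statement for $\DI(4)$ requires a separate verification, which your proposal as written does not supply.
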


The result about $\Omega X$ was known as the
loop space conjecture, and in fact proved by Lin and Kane in a series of
papers in the
more general setting of finite mod $p$ $H$--spaces, using complicated
calculations with Steenrod operations
\cite{lin82}.

Bott's proof used Morse theory and the result may be viewed in
the context of Schubert cell decompositions \cite{mitchell88}. 
Rationally
$H^*(X/T;\Z_p)\otimes \Q = \Q_p[L] \otimes_{\Q_p[L]^{W_X}} \Q_p$, so calculating
the Betti numbers, given the theorem, is reduced to a question about complex reflection
groups---an interpretation of these numbers in terms of length functions on the root
system has been obtained for certain classes of complex
reflection groups, cf.\ \cite{BM98,totaro03}, but the complete
picture is still not clear.
In general the theory of homogeneous
and symmetric spaces for $p$--compact groups is rather unexplored, and warrants attention.

Theorem~\ref{homogeneous} implies that $\pi_3(X)$ is torsion free, and proving that in a
conceptual way might be a good starting point. For Lie groups, Bott in fact
stated the, now classical, fact that $\pi_3(G) \cong \Z$ for $G$ simple. The analogous statement is {\em not} true
for most of the exotic $p$--compact groups; for instance it obviously fail for
the Sullivan spheres
other than $S^3$. However, it {\em is} true that $\pi_3$ is non-zero for finite loop spaces, as a
consequence of a celebrated theorem of Clark \cite{clark63} from $1963$ giving strong
restrictions on the degrees of finite loop spaces. These results helped fuel the
speculation that finite loop spaces should look a lot like compact Lie groups,
a point we will return to in the next section.

\smallskip

Most of the general results about torsion in the cohomology of $BX$ and $X$
due to Borel, Steinberg, and others, also carry through to $p$--compact groups,
but here again with many results relying on the classification. This fault is partly
inherited from Lie groups; see Borel \cite[p.~775]{boreloeuvresII} for a
summary of the status there. In particular
we mention that $X$ has torsion free $\Z_p$--cohomology if and only if $BX$ has torsion
free $\Z_p$--cohomology if and only if every elementary abelian $p$--subgroup factors
through a maximal torus. Likewise $\pi_1(X)$ is torsion free if and only if
every elementary abelian group of rank two factors through a maximal torus; see
\cite{AGMV08,AG09}.

\smallskip

The (complex linear) homotopy representation theory of $X$ is encoded
in the semi-ring
$$\Rep^\bbC(BX) = [BX,\coprod_n B\U(n)\pcom]$$
It is non-trivial since for any connected $p$--compact group $X$
there exists a mono\-morphism
  $BX \to B\U(n)\pcom$, for some $n$; the exotic groups were checked in 
\cite{castellanaexotic-thesis,castellana06,ziemianski09}---indeed, as already alluded to,
several exotic $p$--compact groups can conveniently be {\em constructed} as
homotopy fixed-points inside a $p$--completed compact Lie group.
The general structure of the semi-ring is however still far from
understood. The classification allows one to focus on $p$--completed
classifying spaces of compact Lie groups, but even in this case the semi-ring appears very complicated
\cite{JMO95revisited}; there are higher limits
obstructions, related to interesting problems in
group theory \cite{grodal02}.

Weights can be constructed as usual:
By the existence of a maximal torus, we can lift a homotopy representation
to a map $BT_X \to BT_{\U(n)\pcom}$, well defined up to an action of $\Sigma_n$, and produce a collection of $n$ weights
in $L_X^* = \Hom_{\Z_p}(L_X,\Z_p)$, invariant under the action of the Weyl
group $W_X$.
When $p \nmid \lvert W_X\rvert $, homotopy representations just
correspond to finite $W_X$--invariant
subsets of $L_X^*$, and any homotopy representation decomposes up to
conjugation uniquely into indecomposable representations given by transitive
$W_X$--sets.
When $p \mid \lvert W_X\rvert $ the situation is much more
complicated.

Let us describe what happens in the basic case of $X = \SU(2)\twocom$.
Denote by $\rho_i$ the irreducible complex representation of $\SU(2)$ with highest
weight $i$, and use the same letter for the induced map $B\!\SU(2)\twocom \to
B\U(i+1)\twocom$. Precomposing with the self-homotopy
equivalence  $\psi^k$ of $B\!\SU(2)\twocom$, $k \in \Z_2^\times$, corresponding
to multiplication by $k$ on the root
datum, gives a new representation $k \star \rho_i$ of the same
dimension, but with weights multiplied by $k$.

\begin{thm}$\Rep^\bbC(B\!\SU(2)\twocom)$ has an additive generating set given by $\rho_0$, $k \star \rho_1$, $k \star
  \rho_2$ and $((k+2k') \star \rho_1) \otimes ((k-2k') \star \rho_1)$,
  $k \in \Z_2^\times$, $0 \neq k' \in \Z_2$. These generators are indecomposable, and two representations agree if they have the same weights.
\end{thm}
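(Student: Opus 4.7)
My plan is to decompose the problem in three stages: determinacy by weights, construction and indecomposability of the listed generators, and completeness. The key general fact about $\SU(2)$ at $p=2$ that enters throughout is that the maximal torus normalizer $N = N(T)$, isomorphic to the infinite binary dihedral group, is $2$--toral, so by Dwyer--Zabrodsky $[BN\twocom, B\!\U(n)\twocom]$ identifies with ordinary representations $N \to \U(n)$ modulo $\U(n)$--conjugation, and the Euler characteristic $\chi(\SU(2)/N) = \chi(\RP^2) = 1$ is a unit in $\Z_2$. I would first show that restriction $r\co [B\!\SU(2)\twocom, B\!\U(n)\twocom] \to [BN\twocom, B\!\U(n)\twocom]$ is injective (by a standard transfer argument using the above Euler characteristic), and that a representation of $N$ is in turn determined up to $\U(n)$--conjugation by its restriction to $T$, an elementary check with the explicit form of $N$: any lift of a $W$--invariant set of weights $T \to \U(n)$ to $N \to \U(n)$ is unique up to conjugation by an element of $T$, since the nontrivial coset of $N/T$ acts by swapping weight eigenspaces in essentially one way. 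Together these give injectivity of the weight map.

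Next I would construct the listed generators and check their indecomposability. The representations $\rho_0, \rho_1, \rho_2$ come from honest $\SU(2)$--representations, $2$--completed. The unstable Adams operations $\psi^k$ for $k \in \Z_2^\times$ exist as self-equivalences of $B\!\SU(2)\twocom$ by Theorem~\ref{conn-classification}, since $\Out(B\!\SU(2)\twocom) \cong \Out(\D_{\SU(2)\twocom}) \cong \Z_2^\times$; precomposing with these yields the $k \star \rho_i$. The exotic four-dimensional generator is assembled via the tensor product $H$--space pairing $B\!\U(2)\twocom \times B\!\U(2)\twocom \to B\!\U(4)\twocom$, and its weights $\{\pm 2k, \pm 4k'\}$ are computed directly from the characters on the torus. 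Indecomposability is then verified on weights: any proposed splitting of, say, $k \star \rho_2$ would demand a two-dimensional summand with weight system $\{\pm 2k\}$, $v_2(2k) = 1$, and the list contains no such generator; similarly the four-dimensional exotic case has weight pairs of differing $2$--adic valuations $1$ and $\geq 2$, neither of which is realizable in isolation.

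The final and hardest step is completeness: showing every realizable $W$--invariant weight multiset $S$ decomposes into the listed atoms. I would stratify $S$ by $2$--adic valuation --- zero entries absorb into copies of $\rho_0$, pairs $\{\pm a\}$ with $v_2(a) = 0$ absorb into $a \star \rho_1$, and pairs with $v_2 \geq 1$ must be grouped either with a zero entry (realizing $a \star \rho_2$) or with another pair of strictly higher $2$--adic valuation (realizing the exotic generator). The principal technical obstacle is ruling out isolated two-weight realizations $\{\pm c\}$ with $v_2(c) \geq 1$; via the determinacy step this reduces to showing that no representation of $N$ whose restriction to $T$ has such weights extends across the inclusion $BN\twocom \to B\!\SU(2)\twocom$. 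I expect this to be carried out by an obstruction calculation over the centralizer category of $\SU(2)\twocom$ at $p = 2$ in the spirit of~\cite{JMO92}, or equivalently by tracing which effective (as opposed to merely virtual) elements of $K^0(B\!\SU(2)\twocom) \cong \Z_2[[\rho_1 - 2]]$ correspond to genuine homotopy representations, with the combinatorics of the realizable weight systems then forced by the $\Z_2^\times$--orbit structure on the root datum.
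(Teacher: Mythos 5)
Your determinacy step rests on a false lemma. A representation of $N=N_{\SU(2)}(T)$ (equivalently, a map $BN\twocom\to B\U(n)\twocom$, which by the way is \emph{not} given by $\Rep(N,\U(n))$ --- Dwyer--Zabrodsky applies to finite $p$--groups, and for the $2$--toral $N$ one must pass to its $2$--discrete approximation $\bigcup_k Q_{2^{k+1}}$, which is exactly why $2$--adic weights appear at all) is \emph{not} determined by its restriction to $T$ once zero weights occur: on the zero--weight space the Weyl element can act by any mixture of $+1$'s and $-1$'s, i.e.\ any sum of the trivial and the sign character of $N$, and these are pairwise non-conjugate. Both really arise from homotopy representations: $\rho_0$ restricts to the trivial character, while the zero--weight line of $\rho_2$ carries the sign character. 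So even granting injectivity of restriction to $BN\twocom$, weights do not determine the $N$--level data, and the uniqueness clause of the theorem cannot be obtained your way. Moreover that injectivity is itself unproved in your sketch: $\chi(\SU(2)/N)=1$ gives a transfer splitting in cohomology, not a detection statement for unstable mapping sets; detecting maps out of $B\!\SU(2)\twocom$ on subgroups is exactly the kind of higher-limit problem (in the spirit of \cite{JMO92,JMO95revisited}) that has to be confronted, not quoted away.

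The missing structural input --- which is also what your completeness step needs --- is the second $2$--stubborn subgroup $Q_8\subset N$, whose normalizer is the binary octahedral group, so that the restriction of any homotopy representation to $BQ_8$ (an honest $Q_8$--representation, by Dwyer--Zabrodsky correctly applied) must be invariant under all of $\Out(Q_8)\cong\Sigma_3$. Encoding an $N$--datum by its weights together with the multiplicities $(m_+,m_-)$ of trivial/sign characters on the zero weights, and letting $r_1$, $r_2$ count the weight pairs $\{\pm a\}$ with $v_2(a)=1$ and $v_2(a)\geq 2$ respectively, this $\Sigma_3$--invariance is equivalent to the single counting condition $m_-+r_2=r_1$; that condition simultaneously forces the zero--weight sign data from the weights (whence uniqueness) and carves out precisely the additive span of the four listed families (whence completeness). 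Your plan of only excluding isolated pairs $\{\pm c\}$ with $v_2(c)\geq 1$ is therefore insufficient: one must also exclude $\{0,\pm4\}$, $\{\pm 2,\pm 2\}$, and in general prove the global constraint above, and your proposed vehicle --- an obstruction calculation over the centralizer (Quillen) category --- cannot see any of this, since $\SU(2)$ has a unique involution and that category is trivial at $p=2$. The correct indexing category is the $2$--stubborn orbit category on $N(T)$ and $Q_8$, together with a genuine argument that restriction to this diagram is bijective onto the compatible families; the $K$--theoretic reformulation you offer is just a restatement of the problem. (For the record, the survey states this theorem without proof, so the comparison here is with what any correct argument has to supply, not with a printed one.)
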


The reader may verify that the decomposition into indecomposables is not unique, e.g.\ for $\rho_6$.
It is at present not clear how to use $\SU(2)\twocom$ to describe the
general structure, as one could have hoped---the thing to note is
that homotopy representations are governed by questions of $p$--fusion of
elements, rather than more global structure. Already for
$\SU(2)\twocom \times \SU(2)\twocom$ there is no upper
bound on the dimension of the indecomposables, and in
particular they are not always a tensor product of indecomposable $\SU(2)\twocom$
representations. More severely, representations need not be uniquely
determined by their weights, e.g.\ for $\Sp(2)\twocom \times \Sp(2)\twocom$.

By using case-by-case arguments, there might be hope to establish a version of Weyl's theorem $R(BX) \xrightarrow{\cong}
R(BT)^{W_X}$, where $R(BX) =
\Gr(\Rep^\C(BX))$ is the Grothendieck group. The result is not proved even for
$p$--completions of compact Lie groups, but the integral version is the main
result in \cite{JO96}. The weaker $K$--theoretic result $K^*(BX;\Z_p) \xrightarrow{\cong}
K^*(BT;\Z_p)^W$ was established in \cite{JO97} (using that $H^*(\Omega X;\Z_p)$
is torsion free). The ring structure of $R(BT)^W$ is also not clear,
and in particular it would be interesting to exhibit some fundamental representations.

\section{Finite loop spaces}\label{section:loop}
In the 1960s and early 1970s, finite loop spaces, not $p$--compact groups, were
the primary objects of study, and there were many conjectures about them \cite{stasheff71}.
The theory of $p$--compact groups enables the resolution of most of them, either in the positive or the
negative, and gives what is essentially 
a parametrization of all finite loop spaces.

We already defined finite loop spaces in Section~\ref{section:pcg}; let us now briefly
recall their history in broad strokes.
 Hopf proved in 1941 \cite{hopf41} that the rational cohomology of any connected,
finite loop space is a graded exterior algebra $H^*(X;\Q) \cong
\bigwedge_\Q(x_1,...,x_r)$, where $\lvert x_i\rvert  = 2d_i-1$, and $r$ is called the {\em rank}. Serre, ten years later  \cite{serre53}, 
showed that 
the list of {\em degrees} $d_1,\ldots,d_r$ uniquely determines the
rational homotopy type of $(X,BX,e)$. In those days, there were not
many examples of finite loop spaces. Indeed, in the early 1960s it was speculated that perhaps every
finite loop space was homotopy equivalent to a compact Lie group, a would-be
variant of Hilbert's $5$th problem. 
This was soon shown to be wrong in several different ways:
Hilton--Roitberg, in 1968, exhibited a 'criminal'
\cite{HR69}, a finite loop space $(X,BX,e)$, of the rational
homotopy type of $\Sp(2)$, such that the underlying space $X$ is not homotopy
equivalent to any Lie group; and Rector \cite{rector71loop} in 1971 observed that
there exists {\em uncountable} many finite loop spaces $(X,BX,e)$ such that $X$ is
homotopy equivalent to $\SU(2)$. The first example may superficially look more
benign than the second; indeed in general there are only finitely
many possibilities for the homotopy type of the underlying space $X$, given the
rational homotopy type of $BX$ \cite{CD71}.
But the exact number depends on homotopy
groups of finite complexes, and does not appear closely related to Lie theory,
so shifting focus from loop space structures $(X,BX,e)$ to that of
homotopy types of $X$, does not appear desirable.

An apparently better option is, as the reader has probably sensed, to pass to
$p$--completions, defined in Section~\ref{section:pcg}.
Sullivan made precise how one can recover a (simply connected) space integrally if
one knows the space ``at all primes and rationally, as well as how they
are glued together''. Along with his $p$--completion, he constructed a
rationalization functor $X \to X_\Q$, with analogous properties, and proved
that these functors fit together in the following arithmetic square.
\begin{prop}[Sullivan's arithmetic square \cite{sullivan74,DDK77}]  \label{arithmetic}Let $Y$ be a simply
  connected space of finite type. Then the following diagram, with obvious maps, is
  a homotopy pull-back square.
$$\xymatrix{Y \ar[r] \ar[d] &  {\prod}_p Y\pcom  \ar[d]\\
Y_\Q \ar[r] & (\prod_p Y\pcom)_\Q}$$
\end{prop}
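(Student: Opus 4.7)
The plan is to reduce the statement to an algebraic arithmetic fracture square for finitely generated abelian groups, by passing to homotopy groups throughout the diagram. Since $Y$ is simply connected of finite type, each $A_i := \pi_i(Y)$ is a finitely generated abelian group, and all four corners of the square are simply connected. The natural map from $Y$ to the homotopy pullback $P$ is therefore a weak equivalence if and only if it induces an isomorphism on all $\pi_i$, and this in turn can be read off from the Mayer--Vietoris long exact sequence
$$\cdots \to \pi_{i+1}\bigl((\textstyle\prod_p Y\pcom)_\Q\bigr) \to \pi_i(P) \to \pi_i(Y_\Q) \oplus \pi_i(\textstyle\prod_p Y\pcom) \to \pi_i\bigl((\textstyle\prod_p Y\pcom)_\Q\bigr) \to \cdots$$
associated with the pullback square.

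First I would identify the homotopy groups of the three non-trivial corners using the defining properties of $p$-completion and rationalization recalled just above the statement: $\pi_i(Y\pcom) \cong A_i \otimes \Z_p$ and $\pi_i(Y_\Q) \cong A_i \otimes \Q$. Since $A_i$ is finitely generated, the canonical map $\prod_p (A_i \otimes \Z_p) \to A_i \otimes \widehat{\Z}$ (with $\widehat{\Z} := \prod_p \Z_p$) is an isomorphism, so $\pi_i(\prod_p Y\pcom) \cong A_i \otimes \widehat{\Z}$; rationalizing yields $\pi_i((\prod_p Y\pcom)_\Q) \cong A_i \otimes \widehat{\Z} \otimes \Q$. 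Under these identifications the diagram of homotopy groups in degree $i$ becomes precisely the classical arithmetic fracture square for the finitely generated abelian group $A_i$.

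The remaining algebraic input is that for every finitely generated abelian group $A$ the sequence
$$0 \to A \to (A \otimes \Q) \oplus (A \otimes \widehat{\Z}) \to A \otimes \widehat{\Z} \otimes \Q \to 0,$$
with first map the diagonal and second map the difference, is short exact. By the structure theorem this reduces to $A = \Z$, where it is the classical adelic fracture $0 \to \Z \to \Q \oplus \widehat{\Z} \to \widehat{\Z} \otimes \Q \to 0$, and $A = \Z/p^k$, where the rational terms vanish and the sequence collapses tautologically. Feeding this short exact sequence into the Mayer--Vietoris sequence degree by degree forces the connecting maps to vanish and identifies $\pi_i(P)$ with $A_i$, so that $Y \to P$ is a weak equivalence.

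The principal subtlety, more than a genuine obstacle, is that infinite products and rationalization do not in general interact well with homotopy groups of arbitrary spaces: both the identification of $\pi_i(\prod_p Y\pcom)$ with $A_i \otimes \widehat{\Z}$ and the commutation of $(-)_\Q$ with the product over primes rely decisively on the finite-type hypothesis, which is precisely what forces each $A_i$ to be finitely generated. Once this bookkeeping is under control the remainder of the argument is formal manipulation with the Mayer--Vietoris sequence and the algebraic fracture lemma.
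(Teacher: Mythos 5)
Your argument is correct, but it cannot be compared against a proof in the paper, because the paper does not prove this proposition: it is quoted as a classical result, with the proof deferred to Sullivan \cite{sullivan74} and to Dror--Dwyer--Kan \cite{DDK77}, where it is established in greater generality (for virtually nilpotent spaces in the latter case). What you give is the standard fracture-square argument for simply connected spaces of finite type: identify the homotopy of the three outer corners as $A_i\otimes\Q$, $A_i\otimes\widehat{\Z}$ and $A_i\otimes\widehat{\Z}\otimes\Q$ with $A_i=\pi_i(Y)$ finitely generated, invoke the algebraic exactness of $0\to A\to (A\otimes\Q)\oplus(A\otimes\widehat{\Z})\to A\otimes\bA_f\to 0$, and run the Mayer--Vietoris sequence of the homotopy pullback; surjectivity of $\Q\oplus\widehat{\Z}\to\bA_f$ kills the connecting maps, and injectivity of $\pi_i(P)\to\pi_i(Y_\Q)\oplus\pi_i(\prod_p Y\pcom)$ together with the equality of its image with that of $\pi_i(Y)$ gives the weak equivalence $Y\to P$. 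Two small points of bookkeeping are worth stating precisely: the identification $\pi_i\bigl((\prod_p Y\pcom)_\Q\bigr)\cong (A_i\otimes\widehat{\Z})\otimes\Q$ needs only that $\prod_p Y\pcom$ is simply connected (rationalization tensors homotopy groups with $\Q$ for any nilpotent space, with no finite-type hypothesis), the finite-type assumption entering instead in $\pi_i(Y\pcom)\cong A_i\otimes\Z_p$ and in $\prod_p(A_i\otimes\Z_p)\cong A_i\otimes\widehat{\Z}$; and the comparison map $Y\to P$ exists because the square commutes (strictly, if one uses a functorial rationalization of the map $Y\to\prod_p Y\pcom$). Compared with the cited sources, your route buys a short, self-contained proof sufficient for the simply connected finite-type case, which is all that is used in this paper, at the cost of the extra generality of the nilpotent setting treated in \cite{DDK77}.
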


This parallels the usual fact that the integers $\Z$ is a pullback of $\hat \Z
= \prod_p \Z_p$ and $\Q$
over the finite adeles $\bA_f = \hat \Z \otimes \Q$. 
If $BX$ is the classifying
space of a connected finite loop space then, by the classification of
$p$--compact groups, all spaces in the diagram are
now understood:
Each $BX\pcom$  is the classifying space of a $p$--compact
group, and the spaces at the bottom of the diagram are determined by
numerical data, namely the degrees: $BX_\Q \simeq K(\Q,2d_1) \times \cdots \times
K(\Q,2d_r)$ and  $(\prod_p BX\pcom)_\Q \simeq K(\bA_f,2d_1) \times \cdots \times
K(\bA_f,2d_r)$, by the result of Serre quoted earlier.
Hence to classify finite loop spaces with some given degrees,
we first have to enumerate all collections of $p$--compact groups with
those degrees; there are a finite number of
these, and they can be enumerated given the classification
\cite[Prop.~8.18]{AG09}.
The question
of how many finite loop spaces with a given set of $p$--completions
is then a question of {\em genus}, determined by an explicit set of
double cosets.

\begin{thm}[Classification of finite loop spaces]\label{loopclassification}
  The assignment which to a finite loop space $Y$ associates the
  collection of $\Z_p$--root data
  $\{\D_{Y\pcom}\}_p$ is a surjection from connected finite loop spaces to collections of $\Z_p$--root
  data, all $p$, with the same degrees $d_1, \ldots, d_r$.
The pre-image of $\{\D_p\}_p$ is parametrized by the set of double cosets
$$\Out(K_\Q)\backslash \Out^c(K_{\bA_f})/\prod_p \Out(\D_p)$$
where  $K_\F = K(\F,2d_1) \times \cdots \times K(\F,2d_r)$, $\F = \Q$ or $\bA_f$.
\end{thm}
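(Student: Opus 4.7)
The plan is to deduce the theorem from Sullivan's arithmetic square (Proposition~\ref{arithmetic}) applied to $BY$, using the classification of $p$--compact groups (Theorem~\ref{conn-classification}) to control the $p$--complete corners and the rational rigidity theorem of Serre to control the rational and adelic corners, both of which are products of Eilenberg--MacLane spaces whose homotopy types depend only on the degrees.

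\textbf{The assignment and surjectivity.} For any connected finite loop space $Y$ of degrees $d_1,\ldots,d_r$, each $BY\pcom$ is a connected $p$--compact group by definition, so Theorem~\ref{conn-classification} attaches to it a $\Z_p$--root datum $\D_{Y\pcom}$. Its degrees agree with $d_1,\ldots,d_r$, since rationally $H^*(BY\pcom;\Q_p) \cong \Q_p[L]^{W}$ has the same Poincar\'e series as $H^*(BY;\Q)$. For surjectivity, given a family $\{\D_p\}_p$ with common degrees, realize each $\D_p$ as $BX_p$ via Theorem~\ref{conn-classification}. Then $(BX_p)_\Q \simeq K(\Q_p,2d_1)\times\cdots\times K(\Q_p,2d_r)$, so $(\prod_p BX_p)_\Q \simeq K_{\bA_f}$; choose any equivalence $K_\Q \to K_{\bA_f}$ in the correct component of the rationalization, and let $BY$ be the homotopy pullback. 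The space $Y = \Omega BY$ is simply connected with finite $\F_p$--cohomology at every prime (inherited from the $BX_p$) and with the rational homotopy type of a finite graded exterior algebra; standard finiteness arguments then produce a finite CW model, making $BY$ a connected finite loop space realizing $\{\D_p\}_p$.

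\textbf{Parametrization of the fiber.} Two such arithmetic squares, with the same top-right and bottom-left corners, produce homotopy equivalent pullbacks precisely when their gluing maps $K_\Q \to K_{\bA_f}$ differ by post-composition with a self-equivalence of $\prod_p BX\pcom$ and by pre-composition with a self-equivalence of $K_\Q$, acting through rationalization on $K_{\bA_f}$. By the space-level automorphism statement in Theorem~\ref{conn-classification}, the outer automorphism group of $\prod_p BX\pcom$ is $\prod_p \Out(\D_p)$, acting coordinate-wise, while the automorphisms of the bottom-left corner give the group $\Out(K_\Q)$. Both groups act on the set of gluing classes, which is identified with $\Out^c(K_{\bA_f})$, namely those components of self-equivalences of $K_{\bA_f}$ compatible with the canonical rationalization maps from both sides. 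The orbit set of this two-sided action is exactly the double coset $\Out(K_\Q)\backslash \Out^c(K_{\bA_f})/\prod_p \Out(\D_p)$ appearing in the statement.

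\textbf{Main obstacle.} The principal subtlety is the precise identification of the component $\Out^c(K_{\bA_f})$ and the verification that the two outer actions factor through it. Concretely, one must analyze the maps $\Out(\D_p) \cong \Out(BX\pcom) \to \Out((BX\pcom)_\Q)$ supplied by Theorem~\ref{conn-classification}, assemble them coherently across all primes, and compare with the image of $\Out(K_\Q) \to \Out(K_{\bA_f})$; only then does one see that gluings modulo these two actions form a genuine double coset. A secondary technical point is promoting the homotopy pullback in the arithmetic square to an honest finite loop space structure, but this reduces to the standard fact that simple connectivity together with finiteness of $H^*(-;\F_p)$ at every prime and a finite-dimensional rational cohomology ring yields a space with the homotopy type of a finite CW complex.
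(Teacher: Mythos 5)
Your overall route---realize each $\D_p$ by Theorem~\ref{conn-classification}, glue with $K_\Q$ over $K_{\bA_f}$ via the arithmetic square, and parametrize the fibre of the assignment by twisting the gluing map and passing to double cosets---is the same as the paper's, which refers to Sullivan \cite{sullivan74} and Wilkerson \cite[Thm.~3.8]{wilkerson76short} for the genus bookkeeping; that part of your sketch, including your caveat about pinning down $\Out^c(K_{\bA_f})$ and checking that both actions land in it, is essentially right.

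The genuine gap is in the finiteness step of surjectivity. You assert that $Y=\Omega BY$ is simply connected; this is false in general, since the theorem is about \emph{connected} finite loop spaces and the root data being realized need not be simply connected: $\pi_1(Y)\cong\pi_2(BY)$ can be a nontrivial finitely generated abelian group (free summands for torus factors, torsion e.g.\ for $\D_{\PU(p)\pcom}$). Because of this, the ``standard fact'' you invoke is not standard and is exactly where the nonformal content lies. From the pullback one does easily get that $H^*(Y;\Z)$ is finitely generated over $\Z$, but to conclude that $Y$ has the homotopy type of a \emph{finite} CW complex one must show that $Y$ is finitely dominated and that its Wall finiteness obstruction in $\widetilde{K}_0(\Z[\pi_1(Y)])$ vanishes; that group is in general nonzero for finite fundamental groups, so the vanishing is a theorem, due to Notbohm \cite{notbohm99} (see \cite[Lemma~1.2]{ABGP04}), which is precisely the input the paper cites at this point. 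Without it your construction only yields a loop space whose underlying space is cohomologically finite, not a finite loop space in the sense of the statement; only in the honestly simply connected case (where $\widetilde{K}_0(\Z)=0$) would your shortcut be legitimate.
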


Here $\Out(K_\Q)$ denotes the group of free homotopy classes of
self-homotopy equivalences, and $\Out^c(K_{\bA_f})$ denotes those homotopy equivalences that induce $\bA_f$--linear maps on
homotopy groups. Since $K_\F$ is an Eilenberg--MacLane space, the set of double
cosets can be completely described algebraically;
see~\cite[\S13]{AGMV08} for a calculation of $\Out(\D_p)$.

The set of double cosets will, except for the degenerate case of tori, be
uncountable. Allowing for only a single prime $p$ everywhere above would parametrize the
number of $\Z_{(p)}$--local finite loop spaces corresponding to a given $p$--compact group
$Y_p$, and also this set is usually uncountable, with a few more
exceptions, such as $\SU(2)$. A similar result holds when one inverts some
collection of primes $\mathcal P$; see \cite[Rem.~3.3]{AG08steenrod} for
more information.

\begin{proof}[Sketch of proof of Theorem~\ref{loopclassification}] There is a natural
  inclusion $K_\Q \to K_{\bA_f}$ induced by the unit map $\Q \to \bA_f$, and one easily
  proves that the pull-back provides a space $Y$ such that $H^*(\Omega
  Y;\Z)$ is finite over $\Z$. That $Y$ is actually homotopy equivalent to a
  finite complex follows by the vanishing of the finiteness obstruction, as
  proved by Notbohm \cite{notbohm99} (see \cite[Lemma~1.2]{ABGP04} for
  more details). 
Twisting the pullback by an element in $\Out^c(K_{\bA_f})$ provides a new
  finite loop space, and after passing to sets of double cosets, this assignment is
  easily seen to be surjective and injective on homotopy types (see
   \cite{sullivan74} and \cite[Thm.~3.8]{wilkerson76short}).
\end{proof}

If one assumes that the finite loop space $X$ has a maximal torus, as
defined by Rector \cite{rector71subgroups}, i.e., a map
$(BS^1)^r \to BX$ with homotopy fiber homotopy equivalent to a finite complex,
for $r = \rank(X)$,
the above picture changes completely. The inclusion of an `integral' maximal
torus prohibits the twisting in the earlier theorem, and one obtains a
proof of the
classical maximal torus conjecture stated by Wilkerson \cite{wilkerson74} in 1974, giving a homotopy
theoretical description of compact Lie groups as exactly the finite
loop spaces admitting a maximal torus.

\begin{thm}[Maximal torus conjecture {\cite{AG09}}] \label{maxtorus}
The classifying space functor, which to a compact Lie group
$G$ associates the finite loop space $(G,BG,e\co G \xrightarrow{\simeq}
\Omega BG)$ gives a one-to-one correspondence between isomorphism classes of
compact Lie groups and finite loop spaces with a maximal torus.
Furthermore, for $G$ connected, $\Out(BG) \cong \Out(G) \cong \Out(\D_G)$.
\end{thm}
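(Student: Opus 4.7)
The plan is to combine the $p$--compact group classification (Theorem~\ref{conn-classification}) with Sullivan's arithmetic square formalism packaged in Theorem~\ref{loopclassification}, using the integral maximal torus as the rigidifying structure that trivializes the potential twisting. I would focus on the connected case; the extension to arbitrary compact Lie groups and general finite loop spaces with maximal torus follows by an analogous argument together with the non-connected $p$--compact classification, since the identity component of a finite loop space with maximal torus inherits one.

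Let $X$ be a connected finite loop space with maximal torus $i\co BT\to BX$, where $BT=(BS^1)^r$ and $r=\rank X$. The integral lattice $L=\pi_2(BT)\cong\Z^r$ carries faithful actions of the $p$--compact Weyl groups $W_p\subseteq\Aut_{\Z_p}(L\otimes\Z_p)$, all of which reduce to the same reflection group on $L\otimes\Q$ governing the polynomial invariants of $H^*(BX;\Q)$. This common rational presentation shows that the $W_p$ come from a single finite $\Z$--reflection group $(W,L)$; likewise the $p$--adic coroots $\{\Z_p b_\sigma\}$ patch to integral coroots $\{\Z b_\sigma\}$, producing a $\Z$--root datum $\D$ with $\D\otimes\Z_p\cong\D_{X\pcom}$ for every prime $p$. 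By Demazure's classical theorem there is a unique compact connected Lie group $G$ with $\D_G=\D$, and by Theorem~\ref{conn-classification} each $BG\pcom$ is isomorphic to $BX\pcom$.

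To lift this to an integral equivalence $BX\simeq BG$, I would invoke Theorem~\ref{loopclassification}: both finite loop spaces have the same rationalization (determined by the degrees via Serre) and the same $p$--completions, so they differ by a class in $\Out(K_\Q)\backslash\Out^c(K_{\bA_f})/\prod_p\Out(\D_p)$. The integral maximal tori factor the arithmetic-square gluing of both $BX$ and $BG$ through the genuine Eilenberg--MacLane space $BT=K(L,2)$; in particular, the map $BT\to(\prod_p BX\pcom)_\Q$ is, under the identifications of the previous paragraph, the canonical map induced by $\Z\hookrightarrow\bA_f$, and the same holds on the $G$--side. Hence the twist lies in the image of $\Out(K_\Q)$ and the double coset class is trivial, forcing $BX\simeq BG$. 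The main obstacle is verifying that the integral maximal torus really eliminates all ambiguity: concretely, that any residual difference in the $p$--adic identifications $BX\pcom\simeq BG\pcom$ surviving in $\Out^c(K_{\bA_f})$ can be absorbed into $\Out(\D_p)$ on either side, which ultimately rests on the uniqueness of maximal tori up to conjugacy from Theorem~\ref{pcgprops}.

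For the outer automorphism statement, $\Out(G)\cong\Out(\D_G)$ is classical Lie theory, and $\Out(BG)\cong\Out(\D_G)$ follows by the same arithmetic-square assembly: a self-equivalence of $BG$ restricts compatibly to each $BG\pcom$ and yields an element of $\Out(\D_G)$ via Theorem~\ref{conn-classification}, while conversely any element of $\Out(\D_G)$ acts compatibly on all $p$--completions and trivially on the degree-determined rational part, hence reassembles into an integral self-equivalence of $BG$.
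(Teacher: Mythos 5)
Your overall route---completing at each prime, invoking Theorem~\ref{conn-classification}, and then using the arithmetic square and Theorem~\ref{loopclassification} to argue that an integral maximal torus kills the twisting, with the non-connected case reduced to the connected one---is the same as the one the paper sketches. But there is a genuine gap at the step you dispose of in one sentence: the claim that the Weyl groups $W_p$ ``all reduce to the same reflection group on $L\otimes\Q$'' and hence that the $\Z_p$--root data patch to a single $\Z$--root datum $\D$. Each $W_p$ acts only on $L\otimes\Z_p$; there is no a priori action on $L\otimes\Q$, and a common degree list certainly does not force a common integral form: by Theorem~\ref{loopclassification}, arbitrary prime-dependent mixtures of root data with the same degrees do occur as finite loop spaces, so this is exactly the point where the integral torus has to do real work. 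What the torus actually provides is a single rational subring $R=\im\bigl(H^*(BX;\Q)\to H^*(BT;\Q)\bigr)$ whose completions $R\otimes\Q_p$ are the invariant rings of the $W_p$; deducing from this that the groups themselves (and the coroots, which at $p=2$ are genuinely extra data, distinguishing $\D_{\Sp(n)}\otimes_\Z\Z_2$ from $\D_{\Spin(2n+1)}\otimes_\Z\Z_2$) descend to $\Z$ requires an actual argument---for instance a Chebotarev/Dirichlet-type argument applied to the automorphisms of $(\Q[L^*],R)$ at all primes simultaneously, or an appeal to the classification of $\Z_p$--root data. Note that at any single prime nothing in your reasoning rules out, say, a Sullivan sphere factor, whose invariant ring $\Q_p[x^m]\subset\Q_p[x]$ is also ``defined over $\Q$''. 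This descent step is the heart of the theorem and is missing from the proposal.

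Two smaller points. A one-to-one correspondence also requires injectivity---that $BG\simeq BG'$ forces $G\cong G'$---which you never address; it again needs that the $\Z$--root datum (not just its completions) is recovered from the loop space. For the automorphism statement the paper simply quotes Jackowski--McClure--Oliver for $\Out(BG)\cong\Out(G)$, whereas your reassembly sketch leaves open both the injectivity of $\Out(BG)\to\prod_p\Out(\D_G\otimes\Z_p)$ (why is a self-equivalence that is trivial at every prime and rationally homotopic to the identity integrally?) and the extraction of a single integral automorphism of $\D_G$ from a compatible family of $p$--adic ones; both demand the same genus/double-coset control as the main argument. The obstacle you do flag---adjusting the identifications $BX\pcom\simeq BG\pcom$ to respect the torus maps---is real but resolvable via conjugacy of maximal tori (Theorem~\ref{pcgprops}); the unflagged descent gap above is the serious one.
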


The statement about automorphisms, which was not part of the original
conjecture, follows from work of Jackowski--McClure--Oliver \cite[Cor.~3.7]{JMO95}.

In light of the above structural statement it is natural to further enquire how exotic finite loop
spaces can be. Whether they are all manifolds was recently settled in the
affirmative by
Bauer--Kitchloo--Notbohm--Pedersen, answering an old question of Browder
\cite{browder61}.

\begin{thm}[\cite{BKNP04}] For any
  finite loop space $(Y,BY,e)$, $Y$ is homotopy equivalent to a
  closed, smooth, parallelizable manifold.
\end{thm}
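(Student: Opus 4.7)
The plan is to combine the classification of finite loop spaces (Theorem~\ref{loopclassification}) with explicit manifold models for each of the building--block $p$--compact groups, and then to derive parallelizability from general properties of finite $H$--spaces. First I would reduce to the connected case by splitting off components. Then, by Theorem~\ref{loopclassification}, the loop space $Y$ is determined by the collection $\{\D_{Y\pcom}\}_p$ together with gluing data encoded as a double coset, and by the classification of $p$--compact groups (Theorem~\ref{conn-classification}) combined with Theorem~\ref{rootdata-classification}, each $Y\pcom$ is the $p$--completion of a product of a compact connected Lie group and finitely many exotic $p$--compact groups. It therefore suffices to produce a single closed smooth manifold $M$ whose $p$--completion realizes each $Y\pcom$ compatibly with the prescribed gluing.

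The manifold models for the building blocks are available by inspection: compact Lie groups are parallelizable via left translation, and odd--dimensional Sullivan spheres are realized by the corresponding spheres $S^{2m-1}$. The remaining odd--prime exotic $p$--compact groups arise, in the style of Aguad\'e and Zabrodsky, as homotopy fixed points of finite $p'$--group actions on compact Lie groups; crucially, the honest smooth fixed--point set of such an action is a closed smooth manifold whose $p$--completion recovers the given exotic $p$--compact group. The $2$--local exotic case $\DI(4)$ requires a separate construction, perhaps via a smooth model built from $\Spin(7)$. One then assembles these local models into a single integral smooth manifold $M$ using Sullivan's arithmetic square (Proposition~\ref{arithmetic}), with the patching equivalences arranged to realize the prescribed double coset.

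For parallelizability, I would exploit the fact that any finite $H$--space has stably trivial tangent bundle, so that for the closed smooth manifold $M$ of dimension $n$ produced above the only remaining obstruction to an honest trivialization lives in a top--dimensional cohomology group and is detected by the Euler characteristic. Since $\chi(M)=\chi(Y)=0$ for any non--contractible finite $H$--space (by Hopf's theorem identifying $H^*(Y;\Q)$ with a non--trivial exterior algebra on odd--degree generators), this obstruction vanishes and $M$ is parallelizable.

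The main obstacle is the assembly step: arranging the prime--by--prime smooth models to glue into a single integral smooth manifold with the exact homotopy type dictated by the double--coset data of Theorem~\ref{loopclassification}, rather than merely with the correct $p$--completions in isolation. In particular the $\DI(4)$ factor at the prime $2$ is the most delicate input, as it cannot be realized as the fixed--point set of a $p'$--group action on a compact Lie group; a dedicated smooth model must be produced and then shown to glue compatibly with the Lie--type data present at all odd primes, which is where the bulk of the technical work is concentrated.
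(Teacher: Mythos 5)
Your overall strategy does not match how this theorem is actually proved, and the step you yourself flag as ``the main obstacle'' is a genuine, fatal gap rather than a technical loose end. Sullivan's arithmetic square (Proposition~\ref{arithmetic}) glues \emph{homotopy types} from $p$--completions and rationalizations; it provides no mechanism whatsoever for gluing \emph{smooth structures}. The spaces $Y\pcom$ are not finite complexes, hence not manifolds, and exhibiting, for each $p$, some closed manifold whose $p$--completion agrees with $Y\pcom$ says nothing about whether the particular integral homotopy type $Y$ singled out by the double--coset data of Theorem~\ref{loopclassification} contains a manifold: deciding which members of a genus are manifolds is precisely the problem to be solved, and smoothability is not a prime--by--prime condition that the arithmetic square can assemble. (Your manifold models for the exotic pieces are also off: the Aguad\'e--Zabrodsky constructions are \emph{homotopy} fixed points of $p'$--actions on $p$--completed classifying spaces, not smooth fixed--point sets of actions on Lie groups.) The actual proof of \cite{BKNP04}, as indicated after the theorem, goes through classical surgery as set up by Pedersen: $Y$ is a Poincar\'e duality complex whose Spivak normal fibration is trivial (this is where the loop/$H$--space structure enters, \`a la Browder--Spanier), so a degree--one normal map from a stably parallelizable manifold exists, and $p$--compact group theory (maximal tori, Weyl groups --- not the classification, which was not even available in 2004 for $p=2$) is used to kill the surgery obstruction. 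Local smooth models and case--by--case realization play no role.

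Your parallelizability paragraph has a related gap: the stable tangent bundle is not a homotopy invariant, so ``any finite $H$--space has stably trivial tangent bundle'' cannot be transferred to an arbitrary closed manifold $M$ merely because $M\simeq Y$; what is homotopy invariant is the Spivak fibration, and fibre--homotopy triviality of the normal bundle does not give stable triviality. In the surgery approach this issue disappears because the manifold is produced by a normal map with trivialized normal bundle, so stable parallelizability is built into the construction; only then does your final step --- the Euler class is the last obstruction and $\chi(Y)=0$ by Hopf's theorem --- correctly finish the argument. That last observation is the one piece of your proposal that survives intact.
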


The result is proved using the theory of $p$--compact groups,
combined with classical surgery techniques, as set up by Pedersen.
It shows the subtle failure of a na\"ive homotopical version of
Hilbert's fifth problem: Every finite loop space is, by classical results, homotopy
equivalent to a topological group, and homotopy equivalent to a
compact smooth manifold by the above. But one cannot always achieve both properties
at once. This would otherwise imply that every finite loop space
was homotopy equivalent to a compact Lie group, by
the solution to Hilbert's fifth problem, contradicting that many
exotic finite loop spaces exist.

One can still ask if every finite loop space is {\em rationally} equivalent to some
compact Lie group? Indeed this was conjectured in the 70s to be the case, and
was verified up to rank
$5$. However, the answer to this question turns
out to be negative as well, although counterexamples only start appearing in
high rank. 

\begin{thm}[A `rational criminal' \cite{ABGP04}] \label{rank66}
There exists a connected finite loop space $X$ of rank $66$, dimension
$1254$, and degrees
$$\{2^8,3^2,4^8,5^2,6^7,7,8^7,9,10^5,11,12^5,13,14^5,16^3,18^2,20^2,22,24^2,26,28,30\}$$
(where $n^k$ means that $n$ is repeated $k$ times)
such that $H_*(X;\Q)$ does not agree with $H_*(G;\Q)$ for any
compact Lie group $G$, as graded vector spaces.

This example is minimal, in the sense that any connected, finite loop space of rank less than $66$ is rationally equivalent to some compact Lie group $G$.
\end{thm}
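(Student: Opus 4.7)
The plan is to translate the statement into a purely combinatorial problem about multisets of degrees, and then to separate the argument into an existence construction and a finiteness verification for the minimality clause. By Serre's theorem recalled in the excerpt, the rational homotopy type of a connected finite loop space $X$ is determined by the multiset $D = \{d_1, \ldots, d_r\}$ of its degrees, and the same is true for compact Lie groups. Hence $X$ is rationally equivalent to some compact Lie group $G$ if and only if $D$ coincides with the degree multiset of $G$. The task becomes: find $D$ with $|D| = 66$ such that (i) there is a connected finite loop space with degrees $D$, but (ii) no compact connected Lie group has degrees $D$; and show that for $|D| \le 65$, (i) forces (ii) to fail.

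By Theorem~\ref{loopclassification}, condition (i) reduces to the existence, at every prime $p$, of a $\Z_p$-root datum with degrees $D$. Combining this with Theorem~\ref{rootdata-classification}, at each prime $p$ this $\Z_p$-datum splits as a product of a $\Z$-root datum (completed at $p$) and exotic $\Z_p$-root data drawn from Table~1, and the degree multiset is the multiset union of the factor degrees. Meanwhile condition (ii) asks that $D$ admit \emph{no} global partition into degree sequences of simple $\Z$-root data $A_n$, $B_n/C_n$, $D_n$, $G_2$, $F_4$, $E_6$, $E_7$, $E_8$ (together with tori). The gap between these two conditions is precisely what permits a rational criminal to exist: the $\Z$-root data used in the partitions at different primes in (i) are allowed to differ, and exotic $\Z_p$-factors contribute degrees unavailable to $\Z$-root data.

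For the rank-$66$ construction, the idea is to assemble a collection $\{\D_p\}_p$ in which exotic $\Z_p$-reflection groups from Table~1 are used at the primes where they exist, and Lie factors with matching degree contributions are used at the other primes, arranged so that no single $\Z$-root datum can accommodate the entire multiset. Given a candidate list $D$ such as the one in the statement, verification is mechanical: for each prime $p$ occurring in the relevant last column of Table~1, one writes down the proposed $\D_p$ and reads off its degrees from the table; for all other primes, one exhibits a compact Lie group whose Weyl-group degrees reproduce $D$; finally one checks by exhaustion over the bounded list of simple $\Z$-root data summing to rank $66$ that no global Lie assembly matches $D$, using that the degree multiplicities like $6^7$, $8^7$, $10^5$ and the presence of $16^3$, $20^2$, $22$, $26$ force combinations that conflict at some prime.

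The main obstacle is the minimality assertion, since it requires a finite but delicate enumeration rather than a single explicit example. The plan is to bound the possibilities: each irreducible exotic $\Z_p$-reflection group from Table~1 has rank at most~$8$, and only finitely many exotic combinations occur for each prime, so one can organize the search by rank and by the exotic factors used. For each multiset $D$ with $|D| \le 65$ satisfying (i), the compatibility across all primes severely constrains which exotic factors can appear: the same degree contributions must be realizable at the (infinitely many) primes where those exotic factors do not exist, which forces them to agree with degree contributions of Lie factors, and one then reassembles a global $\Z$-root datum with degrees $D$. I expect the hard part to be keeping the casework tractable; the threshold rank $66$ should arise as the smallest rank at which the mutual constraints on exotic and Lie contributions across primes first fail to collapse to a single Lie assembly.
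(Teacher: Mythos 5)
Your overall route is the same as the paper's: by Hopf--Serre the rational type of a connected finite loop space is its degree multiset, Theorem~\ref{loopclassification} (with Theorem~\ref{rootdata-classification}) converts realizability into the existence of $\Z_p$--root data with those degrees at every prime, and both the rank--$66$ example and the minimality claim then become finite combinatorics; this is exactly the reduction the survey makes before deferring the actual choice of $p$--compact groups at each prime, and the case analysis, to \cite{ABGP04}. (Your silent restriction to \emph{connected} Lie groups is harmless: a disconnected $G$ has $\dim H_0(G;\Q)>1$, so it can never match a connected $X$ as a graded vector space.) Two points need repair. First, your organizing claim that every irreducible exotic $\Z_p$--reflection group in Table~1 has rank at most $8$ is false: family 2a contains exotic groups $G(m,s,n)$ with $m\geq 3$ of arbitrary rank $n$ (e.g.\ $G(3,1,n)$ at primes $p\equiv 1 \pmod 3$), so the finiteness of the minimality search must be obtained differently --- for instance from the constraint at $p=2$, where the only admissible irreducible degree lists are those of simple $\Z$--root data together with $\{4,6,14\}$ for $G_{24}$, which already leaves only finitely many candidate multisets of rank below $66$; or, for a fixed multiset $D$, by noting that only the finitely many exotic groups whose degrees lie inside $D$ are relevant. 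Second, your description of the non-Lie verification conflates the two conditions: whether $D$ is the degree multiset of a compact connected Lie group is a prime-independent partition question about $\Z$--root data, so nothing ``conflicts at a prime'' there; the primes enter only in certifying that $D$ \emph{is} realizable by some collection $\{\D_p\}_p$, and the actual content of the theorem --- the explicit exotic/Lie mixtures at each prime for the stated degree list, and the rank-by-rank argument showing that below rank $66$ the cross-prime constraints always collapse onto a Lie degree multiset --- is precisely the part your proposal, like the survey, leaves to \cite{ABGP04}.
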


In \cite{ABGP04} there is a list of which $p$--compact group to choose at each
prime. By the preceeding discussion, the problem of finding such a space is a
combinatorial problem, and one can
show that in high enough rank there
will be many examples.

\section{Steenrod's problem of realizing polynomial rings}\label{section:steenrod}
The 1960 ``Steenrod problem''
\cite{steenrod61,steenrod71}, asks, for a given ring $R$, which graded polynomial
algebras are realized as $H^*(Y;R)$ of some space $Y$, i.e., in which
degrees can the generators occur? In this section we give some background on this classical problem and
describe its solution in
\cite{AG08steenrod,AG09}.

Steenrod, in his original paper \cite{steenrod61}, addressed the
case of polynomial rings in a single variable: For $R =
\Z$ the only polynomial rings that occur are $H^*(\CP^\infty;\Z) \cong
\Z[x_2]$ and $H^*(\HP^\infty;\Z) \cong \Z[x_4]$, as he showed by
a short argument using his cohomology operations. Similarly, for $R=\F_p$ he showed that
the generator has to sit in degree $1$,$2$, or $4$ for $p=2$ and in degree $2n$
with $n | p-1$ for $p$ odd, but now as a consequence of Hopf invariant
one and its odd primary version (though it was not known at the time whether
the $p$ odd cases were realized when $n \neq 1,2$).

There were attempts to use the above
techniques to settle polynomial rings in several variable, but they gave only
very partial results. In the 70s, however, Sullivan's method, as generalized
by Clark--Ewing, realized many polynomial rings, as explained in
Section~\ref{section:existence}. Conversely, in the 80s Adams--Wilkerson
\cite{AW80short} and others
put restrictions on the potential degrees, using categorical properties of the category
of unstable algebras over the Steenrod algebra. This eventually led to the
result of Dwyer--Miller--Wilkerson \cite{DMW92} that
for $p$ large enough the Clark--Ewing examples are exactly the possible
polynomial cohomology rings over $\F_p$.

In order to tackle all primes, it turns out to be useful to have a
space-level theory, and that is what
$p$--compact groups provide. Namely, if $Y$ is a space such that $H^*(Y;\F_p)$
is a polynomial algebra, then the Eilenberg--Moore spectral sequence shows
that $H^*(\Omega Y;\F_p)$ is finite, and hence $Y\pcom$ is a $p$--compact
group.

\begin{thm}[Steenrod's problem, $\ch(R) \neq 2$ \cite{AG08steenrod}] Let $R$ be a commutative Noetherian
  ring of finite Krull dimension and let $P^*$ be a graded polynomial $R$--algebra in
  finitely many variables, all in positive even degrees. 

Then there exists a
  space $Y$ such that $P^* \cong H^*(Y;R)$ as graded algebras if and only if for
  each prime $p$ not a unit in $R$, the degrees of $P^*$ halved is a multiset
  union of the degrees lists occurring
  in Table~1 at that prime $p$, and the degree one, with the following exclusions (due to
  torsion): $(G(2,2,n), p =2)$ $n \geq 4$, $(G(6,6,2), p=2)$, $(G_{24}, p=2)$, $(G_{28}, p=2,3)$, $(G_{35},
  p=2,3)$, $(G_{36}, p=2,3)$, and $(G_{37}, p=2,3,5)$.
\end{thm}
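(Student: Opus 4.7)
My plan is to decouple the problem into a prime-local question answered by the classification of $p$--compact groups and a global assembly problem solved by Sullivan's arithmetic square.

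For the necessity direction, suppose $Y$ satisfies $H^*(Y;R) \cong P^*$, and fix a prime $p$ not invertible in $R$. Since $P^*$ is a free $R$--module concentrated in even degrees, it is flat over $R$, and hence for any $R$--algebra $k$ one has $H^*(Y;k) \cong P^* \otimes_R k$. Choosing $k$ to be a residue field of $R$ of characteristic $p$ (and then passing to $\F_p$ by a further flat base change), one concludes that $H^*(Y;\F_p)$ is a polynomial $\F_p$--algebra on generators in the same degrees. The Eilenberg--Moore spectral sequence for the path--loop fibration then forces $H^*(\Omega Y;\F_p)$ to be finite, so $Y\pcom$ is the classifying space $BX$ of a connected $p$--compact group $X$ with $H^*(BX;\F_p)$ polynomial. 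Applying Theorem~\ref{conn-classification} together with the splitting of Theorem~\ref{rootdata-classification}, $X$ decomposes as a product of a $p$--completed torus and irreducible factors associated to rows of Table~1; the torus summands contribute degree $1$ entries to the halved degree list, while each irreducible factor contributes the corresponding row's degrees. The additional requirement that $H^*(BX;\F_p)$ be truly polynomial forces $H^*(BX;\Z_p)$ to be $p$--torsion free, which eliminates precisely the entries of the stated exclusion list.

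For the converse, given degrees satisfying the conditions, I realize them $p$--locally and then glue globally. At each prime $p$ not invertible in $R$, the classification provides a connected $p$--compact group $BX_p$---a product of a $p$--completed torus with the Lie-theoretic and exotic pieces from Section~\ref{section:existence} dictated by the degree decomposition---whose $\F_p$--cohomology is the prescribed polynomial algebra (the exclusions guarantee this is actually achievable). Rationally, Serre's theorem identifies the type of each $BX_p$ as $K_{\Q_p} = \prod_i K(\Q_p,2d_i)$, matching rationally with $K_\Q = \prod_i K(\Q,2d_i)$. Feeding this compatible system into Sullivan's arithmetic square (Proposition~\ref{arithmetic}), after inverting in the pull-back those primes that are already units in $R$, assembles a space $Y$ whose relevant $p$--completions realize the $BX_p$ and whose rationalization is $K_\Q$. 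By flatness one then reads off, prime by prime and rationally, that $H^*(Y;R) \cong P^*$ as graded $R$--algebras.

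The principal obstacle is the precise determination of the exclusion list, i.e.\ identifying exactly those rows of Table~1 whose associated $p$--compact group carries $p$--torsion in integral cohomology. For the Lie-theoretic rows this draws on Borel's classical torsion-prime calculations for compact connected Lie groups, while for the exotic $\Z_p$--reflection groups one must inspect the homotopy colimit construction of Theorem~\ref{realization} and verify, via Lannes' $T$--functor and Steinberg--Nakajima-type fixed point theorems, exactly when the invariant ring $\F_p[L]^W$ coincides with $H^*(BX;\F_p)$. This is an unavoidably case-by-case verification. A secondary subtlety is ensuring that the space built from the arithmetic square has genuinely polynomial $R$--cohomology, and not merely the correct Poincar\'e series; one handles this by lifting polynomial generators from $H^*(Y;\F_p)$ (for all relevant $p$) and from $H^*(Y;\Q)$ to $H^*(Y;R)$ and comparing ranks in each degree, using that polynomial generators in even degrees are detected both rationally and mod $p$.
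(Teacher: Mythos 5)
Your overall architecture is the right one and matches the strategy of the cited proof: reduce to $\F_p$, observe via the Eilenberg--Moore spectral sequence that $Y\pcom$ is the classifying space of a connected $p$--compact group, read off the halved degrees from the Weyl group, realize each local piece by the known constructions, and glue with a localized arithmetic square, with a case-by-case torsion analysis producing the exclusion list. Two caveats on the local step, though. First, you invoke the full classification of $p$--compact groups, whereas the proof in \cite{AG08steenrod} deliberately avoids it (as remarked after the theorem in the text): for the degree statement one only needs the Dwyer--Wilkerson facts that $W_X$ is a finite $\Z_p$--reflection group and $H^*(BX;\Z_p)\otimes\Q \cong (H^*(BT;\Z_p)\otimes\Q)^{W_X}$, together with the Clark--Ewing classification of $\Q_p$--reflection groups; the exclusions are then handled by reflection-group and invariant-theoretic arguments rather than by enumerating the groups. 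Your route is logically admissible (the classification is a theorem) and turns the exclusion analysis into a finite check on known groups, but it is a much heavier input than the statement needs; note also that a connected $p$--compact group is in general only a central quotient of a product (Theorem~\ref{data-quotient-structure}), not a product of a torus with irreducible factors---fortunately the degree multiset only depends on the decomposition of $(W,L\otimes_{\Z_p}\Q_p)$ into irreducibles, so that conclusion survives.

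The genuine gap is the treatment of general coefficient rings. The assertion that, because $P^*$ is a flat $R$--module, $H^*(Y;k)\cong P^*\otimes_R k$ for every $R$--algebra $k$ is not a valid inference: change of coefficients in cohomology passes through homology via a universal-coefficient spectral sequence $\Ext^s_R(H_t(Y;R),k)\Rightarrow H^{s+t}(Y;k)$, and knowing that the cohomology is a free $R$--module does not by itself control $H_*(Y;R)$ (at this stage $Y$ is not known to be of finite type); likewise $H^*(Y;k)\cong H^*(Y;\F_p)\otimes_{\F_p}k$ can fail for infinite extensions $k/\F_p$ and infinite-type spaces, so your ``further flat base change'' down to $\F_p$ is really a descent step that needs justification. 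This reduction from $R$ to $\F_p$---and, in the converse direction, the passage from $\Z$ with the appropriate primes inverted up to $R$, including the verification that the arithmetic-square pullback has finitely generated free homology so that dualization and base change become honest---is exactly where the Noetherian and finite Krull dimension hypotheses do their work in \cite{AG08steenrod}; as written, your proposal never uses these hypotheses, which signals the missing argument. Finally, you rightly flag the exclusion list as a case-by-case torsion determination, but in the necessity direction one must show that no $p$--compact group whose Weyl group contains an excluded irreducible factor has evenly generated polynomial $\F_p$--cohomology (not merely that the standard candidates have torsion); on your classification-based route this is a finite but still substantive verification, which the proposal defers.
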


When $\ch(R) \neq 2$, all generators are in even
degrees by anti-commutativity, so the assumptions of the theorem are satisfied.
The proof in \cite{AG08steenrod} only relies on the general
theory of $p$--compact groups, not on the classification. The case $R=\F_p$,
$p$ odd, was solved earlier by Notbohm \cite{notbohm99}, 
also using $p$--compact group theory.
Taking $R = \Z$ gives the old
conjecture that if $H^*(Y;\Z)$ is a polynomial ring, then it is isomorphic to
a tensor product of copies of $\Z[x_2]$, $\Z[x_4, x_6, \ldots, x_{2n}]$, and $\Z[x_4, x_8,
\ldots, x_{4n}]$, the cohomology rings of $\CP^\infty$,
$B\!\SU(n)$ and $B\!\Sp(n)$.

\begin{thm}[Steenrod's problem, $\ch(R) = 2$ \cite{AG09}] \label{Steenrod}
Suppose that $P^*$ is a gra\-ded polynomial algebra in finitely many variables
over a commutative ring $R$ of characteristic $2$. Then $P^* \cong H^*(Y;R)$ for a
space $Y$ if and only if
$$
P^* \cong H^*(BG;R) \otimes H^*(B\!\DI(4);R)^{\otimes r}\otimes
H^*(\RP^\infty;R)^{\otimes s} \otimes H^*(\CP^\infty;R)^{\otimes t}
$$
as a graded algebra, for some $r, s, t \geq 0$, where $G$ is a compact connected Lie group
with finite center.
In particular, if all generators of $P^*$ are in degree $\geq 3$ then
$P^*$ is a tensor product of the cohomology rings of
the classifying spaces of $\SU(n)$, $\Sp(n)$, $\Spin(7)$,
$\Spin(8)$, $\Spin(9)$, $G_2$, $F_4$, and $\DI(4)$.
\end{thm}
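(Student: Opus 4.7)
The ``if'' direction is immediate from the K\"unneth formula applied to the product $BG \times B\!\DI(4)^{\times r} \times (\RP^\infty)^{\times s} \times (\CP^\infty)^{\times t}$.

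For ``only if'', the plan is to first reduce to $R = \F_2$. Since $P^* = H^*(Y;R)$ is free over $R$, the universal coefficient theorem gives $H^*(Y;\F_2) \cong P^* \otimes_R \F_2$, which is again polynomial in the same generator degrees; a decomposition at the $\F_2$--level will transfer back to $R$ because polynomial algebras are determined up to isomorphism by their generator degrees. Next, I would peel off the degree-one generators: setting $s = \dim_{\F_2} H^1(Y;\F_2)$, the mod-$2$ classifying map yields a map $Y \to (\RP^\infty)^{\times s}$; the Serre spectral sequence of its homotopy fibre $F \to Y \to (\RP^\infty)^{\times s}$, combined with the polynomial hypothesis, forces a tensor decomposition $H^*(Y;\F_2) \cong H^*((\RP^\infty)^{\times s};\F_2) \otimes H^*(F;\F_2)$. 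Replacing $Y$ by $F$, we reduce to the case where $H^*(Y;\F_2)$ is polynomial on generators in positive even degrees.

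The core step invokes the classification of $2$--compact groups. The Eilenberg--Moore spectral sequence $\Tor_{H^*(Y;\F_2)}(\F_2,\F_2) \Rightarrow H^*(\Omega Y;\F_2)$ has $E_2$--page a finite exterior algebra, so $H^*(\Omega Y;\F_2)$ is finite and $Y\twocom$ is a connected $2$--compact group. By Theorems~\ref{rootdata-classification} and~\ref{conn-classification}, together with the fact that $\DI(4)$ is the only exotic $2$--compact group, we obtain
$$Y\twocom \simeq (BG)\twocom \times B\!\DI(4)^{\times r}$$
for some connected compact Lie group $G$. Writing $G = G'' \cdot T^t$ with $T^t = Z(G)^0$ and $G''$ of finite center, the torus factor contributes $(\CP^\infty)^{\times t}$; and for $H^*(BG'';\F_2)$ to be polynomial, the simple factors of $G''$ must lie in the list $\SU(n), \Sp(n), \Spin(7), \Spin(8), \Spin(9), G_2, F_4$, which exhausts the connected simple compact Lie groups whose $\F_2$--cohomology of the classifying space is polynomial.

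The main obstacle I anticipate is ensuring that the tensor splittings are clean: the $\RP^\infty$ peel-off must yield an actual tensor product rather than a twisted extension (so the relevant Serre spectral sequence must collapse as hoped), and the product decomposition of $Y\twocom$ must yield the asserted factorisation at the level of $H^*(Y;R)$, not merely of $H^*(Y;\F_2)$. Once these reductions are in place, the deep input is the classification of $2$--compact groups itself.
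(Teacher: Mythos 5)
Your high-level route is the same as the paper's (one-line) sketch: reduce to $R=\F_2$, use the Eilenberg--Moore spectral sequence to see that $Y\twocom$ is a $2$--compact group, apply the classification with $\DI(4)$ as the unique exotic factor, and finish with classical facts about $H^*(BG;\F_2)$; the ``if'' direction via K\"unneth is fine. But two of your reductions have genuine gaps. First, a commutative ring of characteristic $2$ is an $\F_2$--algebra but need not admit any ring homomorphism to $\F_2$ (take $R=\F_4$ or $\F_2[t]/(t^2)$), so there is no base change producing $H^*(Y;\F_2)\cong P^*\otimes_R\F_2$. The available comparison goes the other way, $H^n(Y;R)\cong \Hom_{\F_2}(H_n(Y;\F_2),R)$, and one must argue that polynomiality over $R$ forces $H_*(Y;\F_2)$ to be of finite type and then descend polynomiality, with its generator degrees, along $\F_2\to R$; this is the actual content of the ``reduction to $\F_2$'' step in \cite{AG09}, and your appeal to universal coefficients does not supply it. Second, the degree-one peel-off is exactly where the non-connectedness of $X=\Omega(Y\twocom)$ has to be confronted, and your spectral sequence claim does not do that work. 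Splitting $P^*$ abstractly as $\F_2[x_1]^{\otimes s}$ tensored with a polynomial algebra on generators of degree $\geq 2$ is automatic; what you need is that this second factor is again \emph{realized}, i.e.\ that the fibre (equivalently $BX_0$) has polynomial cohomology in the complementary degrees, so that the classification of \emph{connected} $2$--compact groups applies. The Serre spectral sequence of $F\to Y\to(\RP^\infty)^{\times s}$ computes $H^*(Y)$ from $H^*(F)$, over a non-simply connected base with a priori twisted coefficients, so it cannot by itself ``force'' the decomposition; one needs, for instance, to show that $\pi_0(X)$ is elementary abelian and acts unipotently (it is a finite $2$--group) so that an Eilenberg--Moore/freeness argument over $B\pi_0(X)$ converges, and the example $B\oO(2)$ --- where the splitting holds as rings but not over the Steenrod algebra nor at space level --- shows that this step cannot be waved through.

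Your endgame also overstates the classical input. It is false that the simple factors must come from $\SU(n),\Sp(n),\Spin(7),\Spin(8),\Spin(9),G_2,F_4$: for example $\SO(m)$ has finite center and $H^*(B\!\SO(m);\F_2)=\F_2[w_2,\dots,w_m]$ is polynomial, and such non-simply connected groups are precisely why the theorem's first clause allows an arbitrary compact connected $G$ with finite center (the paper even remarks that listing all such $G$ is left open). Moreover $G=G''\cdot T^t$ is not a direct product in general (e.g.\ $\U(n)$), so the $\CP^\infty$ factors are not simply ``contributed by the torus''; the splitting must be made at the level of generator degrees. Only under the hypothesis that all generators have degree $\geq3$ does $H^2(BG;\F_2)\cong\Hom(\pi_1(G),\F_2)=0$ allow one to replace $G$ $2$--adically by its universal cover and invoke Borel's list of simply connected groups with polynomial mod $2$ cohomology, which is how the final clause is obtained.
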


The proof reduces to $R = \F_2$, and then uses the classification of
$2$--compact groups.
It would be interesting to try to list all polynomial
rings which occur as $H^*(BG;\F_2)$ for $G$ a compact connected Lie
group with finite center.

One can also determine to which extend the space is unique. The
following result was proved by Notbohm \cite{notbohm99} for $p$
odd and \cite{AG09} for $p=2$, as the
culmination of a long series of partial results, started by Dwyer--Miller--Wilkerson
\cite{DMW86,DMW92}.

\begin{thm}[Uniqueness of spaces with polynomial $\F_p$--cohomology] \label{cohomologicaluniqueness} 
Suppose $A^*$ is a finitely generated polynomial
$\F_p$--algebra over the Steenrod algebra ${\mathcal A}_p$, with
generators in degree $\geq 3$. Then there exists, up to $p$--completion, at most one
homotopy type $Y$ with $H^*(Y;\F_p) \cong A^*$, as
graded algebras over the Steenrod algebra.

If $P^*$ is a finitely generated polynomial
$\F_p$--algebra, then there exists at most finitely many homotopy types $Y$,
up to $p$--completion, such that $H^*(Y;\F_p) \cong P^*$ as graded
$\F_p$--algebras.
\end{thm}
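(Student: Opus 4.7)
My plan is to identify $Y\pcom$ with the classifying space of a $p$-compact group and then invoke Theorem~\ref{conn-classification}. I would proceed in three stages, with a separate comment on the main obstacle.

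\emph{Stage 1 (From space to $p$-compact group).} The hypothesis that $A^*$ is polynomial with generators in degree $\geq 3$ forces $H^1(Y;\F_p)=H^2(Y;\F_p)=0$, so $Y$ is simply connected and $Y\pcom$ is well behaved. The Eilenberg--Moore spectral sequence for the path-loop fibration has $E_2 = \Tor_{A^*}(\F_p,\F_p)$, which for a polynomial algebra collapses to a finite exterior algebra on generators one degree lower. Hence $H^*(\Omega Y;\F_p)$ is finite, and $BX := Y\pcom$ is the classifying space of a connected $p$-compact group $X$ with $H^*(BX;\F_p)\cong A^*$ as unstable algebras over the Steenrod algebra $\mathcal{A}_p$.

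\emph{Stage 2 (Recovering the root datum).} By Theorem~\ref{conn-classification}, it suffices to show that $H^*(BX;\F_p)$ as an $\mathcal{A}_p$-algebra determines $\D_X$. I would apply Lannes' $T$-functor to $A^*$: following the Adams--Wilkerson, Dwyer--Miller--Wilkerson, and Notbohm line of argument, this detects the maximal torus inclusion $BT \to BX$ and the induced $W_X$-action on $L_X \otimes \F_p$. Lifting to the integral reflection representation $(W_X, L_X)$ uses the crucial case-by-case fact underlying Theorem~\ref{rootdata-classification}, namely that the mod $p$ reduction of an exotic $\Z_p$-reflection group remains irreducible, so the lift is unique. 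For $p$ odd this already determines $\D_X$ by Theorem~\ref{rootdata-classification}(ii). For $p=2$ one must additionally distinguish root subgroups of type $B\!\SO(3)\twocom$ from those of type $B\!\SU(2)\twocom$; this information can be extracted from the $\mathcal{A}_p$-algebra structure by applying Lannes theory to centralizers of elements of order $2$, themselves encoded in $A^*$.

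\emph{Stage 3 (Finite multiplicity).} Dropping the Steenrod algebra structure, the multiset of polynomial generator degrees of $P^*$ is still an algebra invariant. By the Shephard--Todd classification (Table~1) there are finitely many irreducible $\Q_p$-reflection groups with any given degree multiset, and by Theorem~\ref{rootdata-classification} this bounds the number of $\Z_p$-root data. Each such root datum supports at most one homotopy type $Y\pcom$ via the first part and Theorem~\ref{conn-classification}, yielding the required finite bound.

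\emph{Main obstacle.} The genuinely hard step is Stage 2, the reconstruction of $\D_X$ from the bare unstable $\mathcal{A}_p$-algebra $A^*$. The Adams--Wilkerson style argument is standard for recovering the reflection data mod $p$, but lifting to an integral root datum, and especially at $p=2$ pinning down the coroot lines that distinguish split from non-split root subgroups $\N_\sigma$, is where the full classification of $\Z_p$-root data and the structural work on $2$-compact groups of \cite{AG09} are needed.
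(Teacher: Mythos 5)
Your overall architecture is the one the paper has in mind: the Eilenberg--Moore argument showing $Y\pcom$ is the classifying space of a $p$--compact group, followed by Theorem~\ref{conn-classification}; note that the survey does not prove the statement itself but attributes it to Notbohm for $p$ odd and to \cite{AG09} for $p=2$. As a proof, however, your proposal has two genuine gaps. The first is Stage~2, which is not a technical footnote but the entire content beyond the classification: one must show that the unstable algebra determines the root datum, and neither of your proposed mechanisms does this as stated. The mod--$p$ irreducibility fact concerns only the exotic reflection groups, so it does not make the lift from $(W,L\otimes\F_p)$ to $(W,L)$ unique for Lie-type lattices; what one actually exploits is that polynomial $\F_p$--cohomology forces torsion-free $\Z_p$--cohomology, so that $H^*(BT;\Z_p)=\Z_p[L]$ with its $W$--action is recovered integrally via Lannes theory applied to the (unique maximal) elementary abelian subgroup. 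And at $p=2$, ``apply Lannes theory to centralizers of involutions to detect which $\N_\sigma$ are split'' is a restatement of the difficulty rather than an argument; in \cite{AG09} the logic runs the other way: the classification first reduces to the explicit finite list of $\Z_2$--root data whose classifying spaces have polynomial mod $2$ cohomology (the list underlying Theorem~\ref{Steenrod}), and one then checks that degrees together with Steenrod operations separate the members of that list. The Steenrod operations are genuinely needed even there: for example $B(\SU(2)\times \SU(4))$ and $B(\Sp(2)\times \SU(3))$ have isomorphic graded $\F_2$--algebras, polynomial on degrees $4,4,6,8$, and are only distinguished by the $\mathcal{A}_2$--action.

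The second gap is in Stage~3. The finiteness statement carries no degree restriction and no Steenrod hypothesis, so at $p=2$ the algebra $P^*$ may have generators in degree one; then $H^1(Y\pcom;\F_2)\neq 0$, the loop space $\Omega (Y\pcom)$ is disconnected, and $Y\pcom$ is not the classifying space of a \emph{connected} $p$--compact group, so neither Theorem~\ref{conn-classification} nor ``the first part'' (which assumes generators in degrees $\geq 3$) can be quoted directly. One needs a supplementary argument splitting off the $\RP^\infty$-- and $\CP^\infty$--type factors, or an appeal to the non-connected theory, exactly as in the characteristic $2$ case of the Steenrod problem. Your counting for the remaining connected case is fine: there are finitely many $\Q_p$--reflection groups with a given multiset of degrees and finitely many $\Z_p$--root data over each, so the classification bounds the number of homotopy types.
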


The assumption $\geq 3$ above cannot be
dropped, as easy examples show, and integrally uniqueness
rarely hold, as discussed in Section~\ref{section:loop}; see also
\cite{AG08steenrod,AG09}.

\enlargethispage*{10pt}
\section{Homotopical finite groups, group actions,\ldots}\label{section:vistas}
This survey is rapidly coming to an end, but we nevertheless want to
briefly mention some 
other recent developments in homotopical group theory.

In connection with the determination of the algebraic K-theory
of finite fields, Quillen and Friedlander proved the following: If $G$
is a reductive group scheme, and $q$ is a prime power, $p \nmid q$, then
$$BG(\F_q )\pcom \simeq (BG(\bbC)\pcom)^{h\langle \psi^q\rangle}$$
where the superscript means taking homotopy fixed-points of the self-map
$\psi^q$ corresponding to multiplication by $q$ on the root datum---it says that, at $p$, fixed-points and homotopy
fixed-points of the Frobenius map raising to the $q$th power agree.

The right-hand side of the equation makes sense with $BG(\bbC)\pcom$
replaced by a $p$--compact group.
Benson speculated in the mid 90s that the resulting object should be the
classifying 
space of a ``$p$--local finite group'', and be determined by a conjugacy
or fusion pattern on a finite $p$--group $S$, as axiomatized by Puig
\cite{puig06} (motivated by block theory),
together with a certain rigidifying $2$--cocycle. He even gave a candidate fusion pattern
 corresponding to $\DI(4)$, namely a fusion pattern
constructed by Solomon years earlier in connection with the classification of
finite simple groups, but shown 
not to exist inside any finite group \cite{benson98sporadic}.

All this turns
out to be true and more! 
A theory of $p$--local finite groups was
founded and developed
by Broto--Levi--Oliver in \cite{BLO03jams}, and has seen  rapid
development by both homotopy theorists and group theorists since
then. 
The Solomon $2$--local
finite groups $\Sol(q)$ were shown to exist in \cite{LO02}, and
a study of Chevalley $p$--local finite groups, $p$ odd, was initiated in \cite{BM07}. 
A number of exotic $p$--local finite groups have been found for $p$
odd, but the family $\Sol(q)$ remains
the only known examples at $p=2$, prompting the speculation that
perhaps they are the only exotic simple
 $2$--local finite groups!
Even partial results in this direction could have
implications for the proof of the classification of finite simple groups.
A modest starting point is the result in \cite{bcglo1} that any so-called {\em constrained} fusion pattern comes from a
(unique constrained) finite group---the result is purely group theoretic, and,
while not terribly difficult, the only known proof uses
techniques of a kind hitherto foreign to the
classification of finite simple groups.

One can ask for a theory more general than $p$--local finite groups, 
broad enough to encompass both $p$--completions of
arbitrary compact Lie groups and $p$--compact groups, and one such theory
was indeed developed in \cite{BLO07}, the so-called $p$--local compact
groups. One would like to identify connected $p$--compact groups
inside $p$--local compact groups in some group theoretic manner. This relates
to the question of describing the relationship between the classical Lie
theoretic structure and the $p$--fusion structure, mentioned several times before in this
paper; the
proof of the classification of $p$--compact groups may offer some hints on how to
proceed.

In a related direction, one may attempt to relax the
condition of compactness in $p$--compact groups to include more
general types of groups; the paper
\cite{CCS10} shows that replacing cohomologically finite by
noetherian gives few new examples.
An important class of groups to understand are Kac--Moody
groups, and the paper \cite{BK02} shows, amongst other things, that
homomorphisms from
finite $p$--groups to Kac--Moody groups still correspond to
maps between classifying spaces.
This gives hope that some of the
homotopical theory of maximal
tori, Weyl groups, etc.\ may also be brought to work in this setting,
but the correct
general definition of a homotopy Kac--Moody group is still
elusive, the Lie theoretic definition being via
generators-and-relations rather than intrinsic. A good understanding of
the restricted case of affine Kac--Moody groups and loop groups would already be very interesting.

Groups were historically born to act, a group action being a
homomorphisms from $G$ to the group of homeomorphisms of a space
$X$. In homotopy
theory, one is however often only given $X$ up to an equivariant map which is
a homotopy
equivalence. Here the
appropriate notion of an action is an element in the mapping
space $\map(BG,B\!\Aut(X))$, where as before $\Aut(X)$ denotes the space of self-homotopy
equivalences (itself an interesting group!).
Homotopical group actions can also be studied one prime at a time, and
assembled to global results afterwards. Of particular interest is the
case where $X$ is a sphere. Spheres are non-equivariantly determined by their dimension, and
self-maps by their degree. It turns out that something
similar is true for homotopical group actions of finite groups on
$p$--complete spheres \cite{GS}. But, one has to interpret dimension as meaning dimension function,
assigning to each $p$--subgroup of $G$ the homological dimension of the
corresponding homotopy fixed-point set, and correspondingly the degree
is a degree function,  viewed as an element in a certain $p$--adic Burnside ring. 
Furthermore there is hope to determine exactly which dimension
functions are realizable. 
Understanding groups is  homotopically open-ended\ldots

\begin{spacing}{0.9}
\bibliographystyle{plain}

\end{spacing}

\end{document}